\theoremstyle{thmstyletwo}%
\newtheorem{theorem}{Theorem}
\newtheorem{proposition}[theorem]{Proposition}%
\newtheorem{remark}{Remark}%
\newtheorem{lemma}{Lemma}
\newtheorem{assumption}{Assumption}
\newtheorem{definition}{Definition}
\numberwithin{equation}{section}
\renewcommand{\L}{\mathrm{L}}
\renewcommand{\d}{\mathrm{d}}
\renewcommand{\H}{\mathrm{H}}
\newcommand{\rank}{\mathrm{rank}}
\newcommand{\sign}{\mathrm{sign}}
\begin{document}

\DOI{}
\copyrightyear{}
\vol{00}
\pubyear{}
\access{Advance Access Publication Date: Day Month Year}
\appnotes{Paper}
\firstpage{1}

\title[\texorpdfstring{$H^\infty$}{}-control for a class of BC hyperbolic PDEs]{$H^\infty$-control for a class of boundary controlled hyperbolic PDEs}

\author{Anthony Hastir*\ORCID{0000-0002-3776-6339} and Birgit Jacob\ORCID{0000-0002-8775-4424}
\address{\orgdiv{School of Mathematics and Natural Sciences}, \orgname{University of Wuppertal}, \orgaddress{\street{Gau{\ss}stra{\ss}e, 20}, \postcode{42119}, \state{Wuppertal}, \country{Germany}}}}
\author{Hans Zwart\ORCID{0000-0003-3451-7967}
\address{\orgdiv{Department of Applied Mathematics}, \orgname{University of Twente}, \orgaddress{\street{P.O. Box 217}, \postcode{7500 AE}, \state{Enschede}, \country{The Netherlands}}} and \address{\orgdiv{Department of Mechanical Engineering}, \orgname{Eindhoven University of Technology}, \orgaddress{\postcode{5600 MB}, \state{Eindhoven}, \country{The Netherlands}}}} 

\authormark{Anthony Hastir et al.}

\corresp[*]{Corresponding author: \href{email:hastir@uni-wuppertal.de}{hastir@uni-wuppertal.de}}

\received{Date}{0}{Year}
\revised{Date}{0}{Year}
\accepted{Date}{0}{Year}


\abstract{A solution to the suboptimal $H^\infty$-control problem is given for a class of hyperbolic partial differential equations (PDEs). The first result of this manuscript shows that the considered class of PDEs admits an equivalent representation as an infinite-dimensional discrete-time system. Taking advantage of this, this manuscript shows that it is equivalent to solve the suboptimal $H^\infty$-control problem for a finite-dimensional discrete-time system whose matrices are derived from the PDEs. After computing the solution to this much simpler problem, the solution to the original problem can be deduced easily. In particular, the optimal compensator solution to the suboptimal $H^\infty$-control problem is governed by a set of hyperbolic PDEs, actuated and observed at the boundary. We illustrate our results with a boundary controlled and boundary observed vibrating string.}
\keywords{Infinite-dimensional systems; Hyperbolic partial differential equations; $H^\infty$-control; Discrete-time systems}

\maketitle
\section{Introduction}

The $H^\infty$-control has been mainly developed in the '80s and is a paramount optimization problem in control theory. The main idea behind $H^\infty$-control is to design a dynamic controller whose objective is twofold: 1. internally stabilizing the closed-loop system, 2. minimizing the effect of the disturbance on the regulated output. Mathematically speaking, consider the dynamical system $\Sigma$ with input $u$, disturbance $d$ and outputs $z$ and $y$ as depicted in Figure \ref{fig:Diagram_Open_Loop}. In what follows, we will refer to $z$ as the \textit{to-be regulated} output.

\begin{figure}
  \centering
  \includegraphics[scale=1.5]{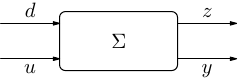}
  \caption{Open-loop system $\Sigma$ with inputs $d$ and $u$ and outputs $z$ and $y$.\label{fig:Diagram_Open_Loop}}
\end{figure}

The objective is to design a dynamic controller, $\Sigma_c$, that takes as input the output $y$ and that returns $u$ as output, see Figure \ref{fig:Diagram_Closed_Loop}. 
\begin{figure}
  \centering
  \includegraphics[scale=1.5]{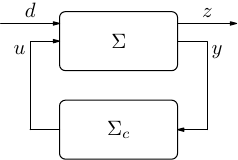}
  \caption{Closed-loop system resulting from the interconnection of $\Sigma$ and $\Sigma_c$.\label{fig:Diagram_Closed_Loop}}
\end{figure}
In $H^\infty$-control, the design of such a controller is done such that the dynamical system resulting from the interconnection is internally stable and such that the closed-loop transfer function from the disturbance $d$ to the \textit{to-be regulated} output $z$ has minimum $H^\infty$-norm. Another version of that control problem is known as suboptimal $H^\infty$-control, which means that the $H^\infty$-norm of the aforementioned transfer function has to be less than some prescribed constant.  

The problem of reduction by feedback of the sensitivity of a system subject to external disturbances has been studied in details in \cite{Zames_81} in which the author uses a weighted semi-norm to measure the sensitivity. In particular, it is shown that uncertainty reduces the ability of feedback to diminish the sensitivity. Few years later, the $H^\infty$-control problem as described above has been characterized in \cite{FrancisDoyle_SIAM_87} and \cite{DoyleGlover_TAC_89} for plants governed by finite-dimensional continuous-time systems. In these references, a parameterization of all the controllers satisfying an $H^\infty$ performance criterion is given. In particular, the solution to that optimization problem is shown to rely on the solution of two Riccati equations. The $H^\infty$-control problem has been studied for finite-dimensional discrete-time systems in \cite{IonescuWeiss_93} in which the authors use the concept of $J$-factorization to characterize the solution. Different variations of the classical $H^\infty$-control problem can be found in the book \cite{Stoorvogel_book_92} in which systems driven by either continuous-time or discrete-time finite-dimensional dynamics are considered. For more results on Riccati equations, $H^\infty$-control and $J$-factorization, we refer the reader to the book \cite{IonescuWeiss_99}. Therein, finite-dimensional systems are considered, both in continuous-time and discrete-time.

$H^\infty$-control has also been studied for infinite-dimensional systems. In particular, the references \cite{Curtain_Hinf_90}, \cite{Curtain_Hinf_91} and \cite{BergelingMorrisRantzer_Aut_20} treat that problem in the case where the input and the output operators are bounded linear operators. In particular, the solution is still shown to be based on the solution of Riccati equations. In addition, an explicit formula for the optimal controller solution to the $H^\infty$-control problem is presented in \cite{BergelingMorrisRantzer_Aut_20}. When the input and output operators are allowed to be unbounded, Staffans developed a theory for solving the suboptimal $H^\infty$-control problem in which spectral factorization is used, see \cite{Staffans_Hinf_98}. Another reference treating this problem in infinite dimensions is \cite{Vankeulen_93}. Therein, both bounded and unbounded control and observation operators are considered. In particular, a solution to the $H^\infty$-control problem is given for systems that fall into the \textit{Pritchard-Salamon} class of distributed parameter systems, see \cite{Salamon_87}.
 
Hyperbolic partial differential equations are an important class of dynamical systems. Their study as infinite-dimensional systems on Banach spaces, and in particular in the context of semigroup theory may be found in the early references \cite{Hyperbolic_Phillips_1957} and \cite{Hyperbolic_Phillips_1959}. Many physical applications in engineering may be modeled by this class of PDEs. As an example, the propagation of current and voltage in transmission lines is modeled by hyperbolic PDEs, known as the telegrapher equations. The change of level of water and sediments in open channels is described by hyperbolic PDEs, called the Saint-Venant-Exner equations. In addition, when looking at fluid dynamics, hyperbolic PDEs are able to describe the motion of an inviscid ideal gas in a rigid cylindrical pipe, via the Euler equations. In a more general sense, conservation laws may be described by hyperbolic PDEs. We refer the reader to \cite{BastinCoron_Book} for a comprehensive overview of applications governed by hyperbolic PDEs. Therein, these equations are presented both in their nonlinear and linear forms, where aspects like stability and control are described as well. 

In this manuscript, we consider the following class of hyperbolic partial differential equations
\begin{align}
  \frac{\partial x}{\partial t}(\zeta,t) &= -\frac{\partial}{\partial \zeta}(\lambda_0(\zeta)x(\zeta,t)),\nonumber\\
  x(\zeta,0) &= x_0(\zeta).\label{StateEquation_Diag_Uniform}
  \end{align}
  The variables $t\geq 0$ and $\zeta\in [0,1]$ are the time and space variables, respectively, whereas $x(\cdot,t)\in\L^2(0,1;\mathbb{R}^n) =: X$ is the state with initial condition $x_0\in X$. The PDEs are complemented with the following inputs and outputs
  \begin{align}
    Ed(t) + \left[\begin{matrix}0\\ I\end{matrix}\right]u(t) &= - K(\lambda_0(0)x(0,t)) - L (\lambda_0(1)x(1,t)),\label{Input_Diag_Uniform}\\
  z(t) &= -K_z(\lambda_0(0)x(0,t)) - L_z(\lambda_0(1)x(1,t)),\label{Output_Reg_Diag_Uniform}\\
  y(t) &= - K_y (\lambda_0(0)x(0,t)) - L_y (\lambda_0(1)x(1,t)),\label{Output_Diag_Uniform}
  \end{align}
  $t\geq 0$, where $u(t)\in\mathbb{R}^p, p\in\mathbb{N}$ is the control input, $d(t)\in\mathbb{R}^{k}, k\in\mathbb{N}$ is a disturbance, $y(t)\in\mathbb{R}^m, m\in\mathbb{N}$ is the measured output and $z(t)\in\mathbb{R}^l, l\in\mathbb{N}$ is the \textit{to-be regulated} output. The scalar-valued function $\lambda_0\in \L^\infty(0,1;\mathbb{R}^+)$ satisfies $\lambda_0(\zeta)\geq \epsilon > 0$ a.e.~on $[0,1]$. The matrices $E, K, L, K_y, L_y, K_z$ and $L_z$ are such that $E\in\mathbb{R}^{n\times k}, K\in\mathbb{R}^{n\times n}, L\in\mathbb{R}^{n\times n}, K_y\in\mathbb{R}^{m\times n}, L_y\in\mathbb{R}^{m\times n}, K_z\in\mathbb{R}^{l\times n}$ and $L_z\in\mathbb{R}^{l\times n}$. 

It is possible to consider the additive term $M(\zeta)x(\zeta,t)$ in the PDEs \eqref{StateEquation_Diag_Uniform}, with $M(\cdot)\in\L^\infty(0,1;\mathbb{R}^{n\times n})$. In that case, it is shown in \cite{HastirJacobZwart_Riesz} and \cite{HastirJacobZwart_LQ} that the change of variables $\tilde{x}(\zeta,t) = Q(\zeta)x(\zeta,t)$ makes the term $M(\zeta)x(\zeta,t)$ disappear\footnote{The matrix $Q$ is the solution to the matrix differential equation $Q'(\zeta) = -\lambda_0(\zeta)^{-1}Q(\zeta)M(\zeta),\,\,\, Q(0) = I$.}. It only affects the boundary conditions slightly, but they remain of the same form. So, without loss of generality, we do not consider the term $M(\zeta)x(\zeta,t)$ in the PDE \eqref{StateEquation_Diag_Uniform}.

Networks of electric lines, chains of density-velocity systems or genetic regulatory networks are modeled by hyperbolic PDEs that fall into the class \eqref{StateEquation_Diag_Uniform}--\eqref{Output_Diag_Uniform}, for more applications, see e.g.~\cite{BastinCoron_Book}. A more general version of that class is presented and analyzed in the book \cite[Chapter 6]{Luo_Guo}. More particularly, the class \eqref{StateEquation_Diag_Uniform}--\eqref{Output_Diag_Uniform} has already been considered in several works, dealing with many different applications. For instance, Lyapunov exponential stability for \eqref{StateEquation_Diag_Uniform}--\eqref{Output_Diag_Uniform} has been studied in \cite{Bastin_2007, Coron_2007_Lyapunov, Diagne_2012} in which the function $\lambda_0$ is replaced by a diagonal matrix with possibly different entries, and in which the boundary conditions, the inputs and the outputs are slightly different compared to \eqref{Input_Diag_Uniform}--\eqref{Output_Diag_Uniform}. Backstepping for boundary control and output feedback stabilization for networks of linear one-dimensional hyperbolic PDEs has been considered in \cite{Auriol_2020, Auriol_BreschPietri, Vazquez_2011_Backstepping}. Boundary feedback control for \eqref{StateEquation_Diag_Uniform}--\eqref{Output_Diag_Uniform} has been studied in \cite{DeHalleux_2003,Prieur_2008_ConservationLaws,Prieur_Winkin_2018}. In these references, the authors consider systems driven by the Saint-Venant-Exner equations as a particular application for illustrating the results. Controllability and finite-time boundary control have also been studied in e.g.~\cite{Chitour_2023, Coron_2021_NullContr} and \cite{Auriol_2016,Coron_2021_Stab_FiniteTime}. Boundary output feedback stabilization for \eqref{StateEquation_Diag_Uniform}--\eqref{Output_Diag_Uniform} has been investigated in \cite{Tanwani2018} in the case of measurement errors, leading to the study of input-to-state stability for those systems. Hyperbolic PDEs similar to \eqref{StateEquation_Diag_Uniform}--\eqref{Output_Diag_Uniform} have attracted attention from a numerical point of view in \cite{Gottlich2017} in which the author proposes a discretization and studies the decay rate of the norm of the trajectories by taking advantage of this discretization. More rencently, Riesz-spectral property of the homogeneous part of \eqref{StateEquation_Diag_Uniform}--\eqref{Output_Diag_Uniform} has been studied in \cite{HastirJacobZwart_Riesz}. In particular, the authors show that the operator dynamics associated to the homogeneous part is a Riesz-spectral operator under the invertibility of some matrices. Moreover, linear-quadratic optimal control has been developed for \eqref{StateEquation_Diag_Uniform}--\eqref{Output_Diag_Uniform} in the case where $d(t)\equiv 0$, see \cite{HastirJacobZwart_LQ}.

The class \eqref{StateEquation_Diag_Uniform}--\eqref{Output_Diag_Uniform} fits also the port-Hamiltonian formalism discussed and analyzed in details in \cite{JacobZwart, Villegas} and \cite{Zwart_ESAIM}. In this case, the analysis of the zero dynamics for the class \eqref{StateEquation_Diag_Uniform}--\eqref{Output_Diag_Uniform} has been studied in \cite{JacobMorrisZwart_zeroDynamics}.

In this work, we derive a solution to the suboptimal $H^\infty$-control problem for the class \eqref{StateEquation_Diag_Uniform}--\eqref{Output_Diag_Uniform}. To this end, we show that \eqref{StateEquation_Diag_Uniform}--\eqref{Output_Diag_Uniform} can be written equivalently as an infinite-dimensional discrete-time system for which the operators are multiplication operators by constant matrices. This has the big advantage of being able to take benefit of the existing theory on suboptimal $H^\infty$-control for finite-dimensional discrete-time systems in order to solve that problem for \eqref{StateEquation_Diag_Uniform}--\eqref{Output_Diag_Uniform}. This approach is particularly useful since it does not need to deal with the unbounded operators that appear because of the boundary inputs and outputs in \eqref{Input_Diag_Uniform}--\eqref{Output_Diag_Uniform}.

The manuscript is organized as follows: Section \ref{SystProp} shows some system properties of \eqref{StateEquation_Diag_Uniform}--\eqref{Output_Diag_Uniform}. The suboptimal $H^\infty$-control problem is described for finite-dimensional discrete-time systems in Section \ref{Hinf_Discrete}. The solution obtained in Section \ref{Hinf_Discrete} is then used to derive the solution of the suboptimal $H^\infty$-control problem for \eqref{StateEquation_Diag_Uniform}--\eqref{Output_Diag_Uniform} in Section \ref{Solution_Conti}. The proposed theory is applied to a vibrating string with velocity measurement and force control in Section \ref{Example}.


\section{System properties}\label{SystProp}

\subsection{Well-posedness analysis}
We start this section by describing the well-posedness of \eqref{StateEquation_Diag_Uniform}--\eqref{Output_Diag_Uniform} as a boundary controlled, boundary observed system. By well-posedness, we mean the following concept: for every $x_0\in X$, every input $u\in\L^2_{\text{loc}}(0,\infty;\mathbb{R}^p)$ and every disturbance $d\in\L^2_{\text{loc}}(0,\infty;\mathbb{R}^k)$, a unique mild solution of \eqref{StateEquation_Diag_Uniform}--\eqref{Input_Diag_Uniform} exists such that the state $x$ and the outputs $y$ and $z$ are in the spaces $X, \L^2_{\text{loc}}(0,\infty;\mathbb{R}^m)$ and $\L^2_{\text{loc}}(0,\infty;\mathbb{R}^l)$, respectively.

Studying well-posedness for \eqref{StateEquation_Diag_Uniform}--\eqref{Output_Diag_Uniform} is eased a lot by considering the system as a boundary controlled port-Hamiltonian system, see e.g. \cite[Chapter 13]{JacobZwart}. Well-posedness is characterized in the next proposition, also available in \cite{HastirJacobZwart_Riesz} or \cite{HastirJacobZwart_LQ}.

\begin{proposition}\label{prop:well-posed}
The boundary controlled boundary observed class of systems \eqref{StateEquation_Diag_Uniform}--\eqref{Output_Diag_Uniform} is well-posed if and only if the matrix $K$ is invertible.
\end{proposition}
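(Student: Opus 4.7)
The plan is to use the method of characteristics to reduce \eqref{StateEquation_Diag_Uniform} to a linear retarded delay equation on the boundary trace, thereby turning the well-posedness question into an algebraic solvability condition on $K$. Introduce $w(\zeta,t):=\lambda_0(\zeta)x(\zeta,t)$ and the change of spatial variable $\sigma=\phi(\zeta):=\int_0^\zeta \lambda_0(s)^{-1}\,\mathrm{d}s$; since $\lambda_0\geq \epsilon>0$, $\phi$ is bi-Lipschitz from $[0,1]$ onto $[0,\tau]$ with $\tau:=\phi(1)<\infty$. In the coordinate $\sigma$, the PDE becomes a unit-speed right transport $\partial_t w+\partial_\sigma w=0$, whose mild solution is $w(\sigma,t)=w(0,t-\sigma)$ for $t\geq \sigma$ and $w(\sigma,t)=w_0(\sigma-t)$ for $t<\sigma$, where $w_0$ is $\lambda_0 x_0$ expressed in the $\sigma$ variable. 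Every boundary quantity appearing in \eqref{Input_Diag_Uniform}--\eqref{Output_Diag_Uniform} is then an affine combination of $w(0,t)=\lambda_0(0)x(0,t)$ and $w(\tau,t)=\lambda_0(1)x(1,t)$.

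For the ``if'' direction, assume $K$ is invertible. Solving \eqref{Input_Diag_Uniform} for the inflow trace yields
\[
  w(0,t) = -K^{-1}\!\left[L\,w(\tau,t) + E d(t) + \begin{bmatrix}0\\ I\end{bmatrix} u(t)\right],
\]
and using $w(\tau,t)=w(0,t-\tau)$ for $t\geq \tau$ (and $w(\tau,t)=w_0(\tau-t)$ on $[0,\tau)$), this is a linear retarded delay equation of delay $\tau$ for $w(0,\cdot)$, with history data provided by $w_0$. It admits a unique $\L^2_{\text{loc}}(0,\infty;\mathbb{R}^n)$ solution for every $x_0\in X$, $u\in\L^2_{\text{loc}}(0,\infty;\mathbb{R}^p)$ and $d\in\L^2_{\text{loc}}(0,\infty;\mathbb{R}^k)$. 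Reconstructing $x$ from $w$ yields a mild solution with $x(\cdot,t)\in X$ for every $t\geq 0$; the outputs $y$ and $z$ in \eqref{Output_Reg_Diag_Uniform}--\eqref{Output_Diag_Uniform} then lie in $\L^2_{\text{loc}}(0,\infty;\mathbb{R}^m)$ and $\L^2_{\text{loc}}(0,\infty;\mathbb{R}^l)$, respectively.

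For the ``only if'' direction, suppose $K$ is singular and pick $v\in\ker K\setminus\{0\}$. With $x_0=0$ and $u,d\equiv 0$, the zero function is a mild solution; a second candidate on $[0,\tau]$ is obtained by choosing any nonzero $\eta\in\L^2(0,\tau)$ and setting $w(0,t):=v\,\eta(t)$, since $w(\tau,t)=0$ on $[0,\tau]$ (from $x_0=0$) reduces the boundary relation $Kw(0,t)+Lw(\tau,t)=0$ to $Kv\eta(t)=0$, which is satisfied by choice of $v$. The main obstacle is to argue that either this non-trivial solution extends to all $t\geq 0$ (yielding non-uniqueness on $[0,\infty)$), or the obstruction to extension, which arises exactly when $Lv\notin\mathrm{range}(K)$, produces a loss of existence for a suitably chosen nonzero $x_0$; in either scenario well-posedness fails. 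This dichotomy is handled cleanly by the port-Hamiltonian framework of \cite[Chapter~13]{JacobZwart}, where invertibility of $K$ matches the full-rank boundary-port condition that is necessary and sufficient for semigroup generation.
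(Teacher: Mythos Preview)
Your approach is genuinely different from the paper's, which simply casts the system as a boundary-controlled port-Hamiltonian system, invokes \cite[Chapter~13]{JacobZwart} to identify well-posedness with $C_0$-semigroup generation by the operator $A$, and then cites \cite{HastirJacobZwart_Riesz,HastirJacobZwart_LQ} for the equivalence of generation with invertibility of $K$. You instead work directly with characteristics, reducing the PDE to a unit-speed transport and the boundary condition to a retarded delay equation for the inflow trace $w(0,\cdot)$. For the ``if'' direction this is entirely correct and buys you something concrete: an explicit step-by-step solution formula on successive intervals of length $\tau$, with no appeal to abstract semigroup theory. This is a more elementary and self-contained argument than the paper's.

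The ``only if'' direction, however, is not complete as written. You correctly exhibit non-uniqueness on $[0,\tau]$ when $K$ is singular, and you correctly identify the dichotomy (extension of the spurious solution versus obstruction when $Lv\notin\mathrm{range}(K)$), but you do not actually carry it out: you neither specify the initial datum that witnesses non-existence in the obstructed case (e.g.\ $w_0\equiv v$ works, since then the boundary relation on $[0,\tau]$ reads $Kw(0,t)=-Lv$, which is unsolvable), nor do you verify that in the unobstructed case the spurious branch can be continued for \emph{all} $t\geq 0$ so as to yield a second global mild solution---and this last point is not automatic, since iterating may require $L^jv\in\mathrm{range}(K)$ for all $j$. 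Your final sentence defers the matter to the port-Hamiltonian framework of \cite[Chapter~13]{JacobZwart}, which is precisely the reference the paper invokes; at that point your characteristics sketch for necessity becomes motivation rather than proof. If you want a self-contained characteristics argument for both directions, finish the dichotomy explicitly; otherwise, the ``only if'' half coincides with the paper's route.
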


\begin{proof}[Proof.]
We only present the main ideas of the proof. We start by defining the operator $A$ by 
\begin{align}
Af &= -\frac{\d}{\d\zeta}(\lambda_0f)\\
D(A) &= \{f\in X, \lambda_0f\in\H^1(0,1;\mathbb{R}^n), K\lambda_0(0)f(0) + L\lambda_0(1)f(1) = 0\}\label{D(A)}.
\end{align}
It is the operator describing the internal dynamics of \eqref{StateEquation_Diag_Uniform}--\eqref{Input_Diag_Uniform}, i.e., the dynamics of \eqref{StateEquation_Diag_Uniform}--\eqref{Input_Diag_Uniform} in which $d \equiv 0$ and $u\equiv 0$. In \cite[Chapter 13]{JacobZwart}, it is shown that \eqref{StateEquation_Diag_Uniform}--\eqref{Output_Reg_Diag_Uniform} is well-posed if and only if the operator $A$ is the generator of a $C_0$-semigroup of bounded linear operators. The proof concludes by noting that this is equivalent to $K$ being invertible, see e.g. \cite[Lemma 1]{HastirJacobZwart_Riesz} or \cite[Proposition 2.1]{HastirJacobZwart_LQ}.
\end{proof}

\subsection{Discrete-time representation}

In this part, we show that the PDEs \eqref{StateEquation_Diag_Uniform} and the boundary inputs and outputs \eqref{Input_Diag_Uniform}--\eqref{Output_Diag_Uniform} may be written as an infinite-dimensional discrete-time system where the operators are multiplication operators by constant matrices. This is performed in a similar way as \cite[Lemma 3.1]{HastirJacobZwart_LQ}. 

\begin{lemma}\label{lem:discrete-representation}
Let us assume that the matrix $K$ is invertible. Moreover, let the functions $k:[0,1]\to [0,1]$ and $p:[0,1]\to [0,\infty)$ be defined as
\begin{align}
k(\zeta) := 1-p(\zeta)p(1)^{-1}, \hspace{0.8cm} p(\zeta) := \int_0^\zeta \lambda_0(\eta)^{-1}\d\eta,
\label{Functions_k_p}
\end{align}
respectively. 
In addition, let the matrices $A_d\in\mathbb{R}^{n\times n}, B_{1,d}\in\mathbb{R}^{n\times k}, B_{2,d}\in\mathbb{R}^{n\times p}, C_{1,d}\in\mathbb{R}^{l\times n}, C_{2,d}\in\mathbb{R}^{m\times n}, D_{11,d}\in\mathbb{R}^{l\times k}, D_{12,d}\in\mathbb{R}^{l\times p}, D_{21,d}\in\mathbb{R}^{m\times k}$ and $D_{22,d}\in\mathbb{R}^{m\times p}$ be respectively given by
\begin{equation}
\begin{array} {lll}
A_d := -K^{-1}L,\hspace{0.5cm} & B_{1,d} := -K^{-1}E, \hspace{0.5cm} & B_{2,d}= -K^{-1}\left[\begin{matrix}0\\ I\end{matrix}\right],\\
C_{1,d} = K_zK^{-1}L-L_z, & C_{2,d} := (K_y K^{-1}L - L_y), & D_{11,d} := K_zK^{-1}E,\\
D_{12,d} = K_zK^{-1}\left[\begin{matrix}0\\ I\end{matrix}\right], & D_{21,d} = K_yK^{-1}E, & D_{22,d} = K_yK^{-1}\left[\begin{matrix}0\\ I\end{matrix}\right].\label{Matrices_Discrete}
\end{array}
\end{equation}
The continuous-time system \eqref{StateEquation_Diag_Uniform}--\eqref{Output_Diag_Uniform} may be equivalently written as the following discrete-time system
\begin{align}
x_d(j+1)(\zeta) &= A_dx_d(j)(\zeta) + B_{1,d}d_d(j) + B_{2,d} u_d(j)(\zeta),\label{State_Discrete}\\
x_d(0)(k(\zeta)) &= \lambda_0(\zeta)x_0(\zeta),\label{Init_Discrete}\\
z_d(j)(\zeta) &= C_{1,d}x_d(j)(\zeta) + D_{11,d}d_d(j)(\zeta) + D_{12,d} u_d(j)(\zeta),\label{Output_Reg_Discrete}\\
y_d(j)(\zeta) &= C_{2,d}x_d(j)(\zeta) + D_{21,d} d_d(j)(\zeta) + D_{22,d} u_d(j)(\zeta).\label{Output_Discrete}
\end{align}
In \eqref{State_Discrete}--\eqref{Output_Discrete} $j\in\mathbb{N}, \zeta\in [0,1], x_d(j)\in X, d_d(j)\in\mathcal{W}, u_d(j)\in \mathcal{U}, z_d(j)\in\mathcal{Z}$ and $y_d(j)\in\mathcal{Y}$ where
\begin{align*}
\mathcal{W} := \L^2(0,1;\mathbb{R}^k), \hspace{0.1cm} \mathcal{U} := \L^2(0,1;\mathbb{R}^p), \hspace{0.1cm} \mathcal{Z} := \L^2(0,1;\mathbb{R}^l),\hspace{0.1cm} \mathcal{Y} := \L^2(0,1;\mathbb{R}^m).
\end{align*}
In addition, the functions $x_d, d_d, u_d, z_d$ and $y_d$ are related to the state, the input, the disturbance and the outputs of the continuous-time system \eqref{StateEquation_Diag_Uniform}--\eqref{Output_Diag_Uniform} via the following relations
\begin{align*}
x_d(j)(\zeta) &= f(j+\zeta),\hspace{0.8cm} j\geq 1,\\
d_d(j)(\zeta) &= d((j+\zeta)p(1)), \hspace{0.8cm} j\in\mathbb{N},\\
u_d(j)(\zeta) &= u((j+\zeta)p(1)),\hspace{0.8cm} j\in\mathbb{N},\\
z_d(j)(\zeta) &= z((j+\zeta)p(1)), \hspace{0.8cm} j\in\mathbb{N},\\
y_d(j)(\zeta) &= y((j+\zeta)p(1)),\hspace{0.8cm} j\in\mathbb{N},
\end{align*}
where $f\left(k(\zeta) + p(1)^{-1}t\right) = \lambda_0(\zeta)x(\zeta,t)$ for $\zeta\in [0,1]$ and $t\geq 0$.
\end{lemma}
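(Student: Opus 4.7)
The strategy is the method of characteristics followed by a sampling argument. First I would introduce the auxiliary flux variable $w(\zeta,t) := \lambda_0(\zeta)x(\zeta,t)$, which turns \eqref{StateEquation_Diag_Uniform} into the transport equation $\partial_t w = -\lambda_0\partial_\zeta w$. Since $p$ is strictly increasing, $k$ is a continuous decreasing bijection of $[0,1]$ onto itself with $k'(\zeta) = -\lambda_0(\zeta)^{-1}p(1)^{-1}$, and a direct differentiation then shows that any function of the form $w(\zeta,t) = f\!\left(k(\zeta)+p(1)^{-1}t\right)$ satisfies the transport equation almost everywhere. Conversely, the level sets of $(\zeta,t)\mapsto k(\zeta)+p(1)^{-1}t$ are precisely its characteristics, so every mild $\L^2$ solution takes this form; this justifies taking such an ansatz.

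Second I would determine $f$. Setting $t=0$ yields $f(k(\zeta)) = \lambda_0(\zeta)x_0(\zeta)$, which is exactly \eqref{Init_Discrete} and fixes $f$ on $[0,1]$ by invertibility of $k$. Because $k(0)=1$ and $k(1)=0$, evaluating the ansatz at the endpoints gives $w(0,t) = f(1+p(1)^{-1}t)$ and $w(1,t) = f(p(1)^{-1}t)$. Substituting these into \eqref{Input_Diag_Uniform}, using the invertibility of $K$, and rescaling time by $s := p(1)^{-1}t$ yields the one-step recursion
\begin{equation*}
f(1+s) \;=\; -K^{-1}L\,f(s) \;-\; K^{-1}E\,d(p(1)s) \;-\; K^{-1}\!\left[\begin{matrix}0\\ I\end{matrix}\right]u(p(1)s).
\end{equation*}
Iterating extends $f$ uniquely to all of $[0,\infty)$, and the assumptions $u,d\in\L^2_{\mathrm{loc}}(0,\infty)$, $x_0\in X$ and $\lambda_0\in\L^\infty$ guarantee $f\in\L^2_{\mathrm{loc}}(0,\infty;\mathbb{R}^n)$, so that the sampled functions $x_d(j)(\zeta):=f(j+\zeta)$ indeed belong to $X$.

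With $x_d$, $u_d$, $d_d$ defined as in the statement, the recursion above evaluated at $s = j+\zeta$ is exactly the state equation \eqref{State_Discrete} with the matrices $A_d, B_{1,d}, B_{2,d}$ of \eqref{Matrices_Discrete}. For the outputs, I would substitute the ansatz into \eqref{Output_Reg_Diag_Uniform} to obtain $z(t) = -K_z f(1+p(1)^{-1}t) - L_z f(p(1)^{-1}t)$, eliminate the term $f(1+p(1)^{-1}t)$ using the recursion, and collect the coefficients of $f(p(1)^{-1}t)$, $d(t)$ and $u(t)$; these reproduce exactly $C_{1,d}=K_zK^{-1}L-L_z$, $D_{11,d}=K_zK^{-1}E$ and $D_{12,d}=K_zK^{-1}\!\left[\begin{matrix}0\\ I\end{matrix}\right]$, which after setting $t=p(1)(j+\zeta)$ is precisely \eqref{Output_Reg_Discrete}. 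The measured output \eqref{Output_Discrete} is then obtained by the identical computation with $(K_y,L_y)$ replacing $(K_z,L_z)$.

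The only point requiring real care is the regularity bookkeeping: since \eqref{StateEquation_Diag_Uniform}--\eqref{Input_Diag_Uniform} admits only a mild $\L^2$ solution, every identity above has to be interpreted almost everywhere, and the travelling-wave profile $f$ is only locally $\L^2$ on $[0,\infty)$. This is however precisely why the state and signal spaces of the discrete-time representation are chosen as the $\L^2$ spaces $X,\mathcal{W},\mathcal{U},\mathcal{Z},\mathcal{Y}$. Beyond the characteristics computation, the invertibility of $K$ and elementary linear algebra, no further ingredient is needed.
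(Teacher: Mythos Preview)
Your proposal is correct and follows essentially the same approach as the paper: introduce the travelling-wave profile $f$ via the method of characteristics, substitute into the boundary and output relations, use the invertibility of $K$ to obtain the one-step recursion, and then sample over unit intervals to obtain \eqref{State_Discrete}--\eqref{Output_Discrete}. You are in fact somewhat more explicit than the paper in justifying the ansatz and in tracking the $\L^2$ regularity of $f$, but the overall strategy and every key step coincide.
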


\begin{proof}[Proof.]
Let $p$ and $k$ be defined as in \eqref{Functions_k_p}. Since $\lambda_0>0$, $p$ is a monotonic function satisfying $p(0) = 0$. In addition, the function $k$ has the properties that $k(0) = 1$ and $k(1) = 0$.
Observe that the solution of \eqref{StateEquation_Diag_Uniform} is given by $x(\zeta,t) = \lambda_0(\zeta)^{-1}f\left(k(\zeta) + p(1)^{-1}t\right)$ for some function $f$. Remark that, according to the definition of $p$, there holds $p(\zeta)p(1)^{-1}\in [0,1]$ for every $\zeta\in [0,1]$. By substituting the expression of $x$ into the initial conditions, the boundary conditions and the output equations \eqref{StateEquation_Diag_Uniform}--\eqref{Output_Diag_Uniform}, we get
\begin{align*}
f(k(\zeta)) &= \lambda_0(\zeta)x_0(\zeta),\hspace{0.5cm} \zeta\in [0,1],\\
Ed(t) + \left[\begin{matrix}0\\ I\end{matrix}\right]u(t) &= -K f(1+p(1)^{-1}t) - L f(p(1)^{-1}t),\\
z(t) &= -K_z f(1+p(1)^{-1}t) - L_z f(p(1)^{-1}t),\\
y(t) &= -K_y f(1+p(1)^{-1}t) - L_y f(p(1)^{-1}t).
\end{align*}
Using the invertibility of $K$, we find
\begin{align}
f(1+p(1)^{-1}t) &= -K^{-1}Lf(p(1)^{-1}t) - K^{-1} \left[\begin{matrix}0\\ I\end{matrix}\right]u(t) - K^{-1}Ed(t),\label{State_f}\\
z(t) &= (K_zK^{-1}L-L_z) f(p(1)^{-1}t) + K_zK^{-1}Ed(t) + K_zK^{-1}\left[\begin{matrix}0\\ I\end{matrix}\right]u(t),\label{Output_Reg_f}\\
y(t) &= (K_y K^{-1}L-L_y)f(p(1)^{-1}t) + K_yk^{-1}Ed(t) + K_y K^{-1}\left[\begin{matrix}0\\ I\end{matrix}\right]u(t)\label{Output_f}.
\end{align}
By defining $A_d\in\mathbb{R}^{n\times n}, B_{1,d}\in\mathbb{R}^{n\times k} B_{2,d}\in\mathbb{R}^{n\times p}, C_{1,d}\in\mathbb{R}^{l\times n}, C_{2,d}\in\mathbb{R}^{m\times n}, D_{11,d}\in\mathbb{R}^{l\times k}, D_{12,d}\in\mathbb{R}^{l\times p}, D_{21,d}\in\mathbb{R}^{m\times k}$ and $D_{2,d}\in\mathbb{R}^{m\times p}$ as in \eqref{Matrices_Discrete}, we have that \eqref{State_f}--\eqref{Output_f} may be written as
\begin{align}
f(1+p(1)^{-1}t) &= A_d f(p(1)^{-1}t) + B_{1,d}d(t) + B_{2,d} u(t),\label{State_Equiv_f}\\
z(t) &= C_{1,d} f(p(1)^{-1}t) + D_{11,d}d(t) + D_{12,d}u(t),\label{Output_Reg_Equiv_f}\\
y(t) &= C_{2,d} f(p(1)^{-1}t) + D_{21,d}d(t) + D_{22,d} u(t).\label{Output_Equiv_f}
\end{align}
Next for $j\in\mathbb{N}$ and $\zeta\in [0,1]$ we define
\begin{align*}
&x_d(j)\in\L^2(0,1;\mathbb{R}^n),\hspace{0.1cm} d_d(j)\in\L^2(0,1;\mathbb{R}^k), \hspace{0.1cm} u_d(j)\in\L^2(0,1;\mathbb{R}^p),\\
&z_d(j)\in\L^2(0,1;\mathbb{R}^l),\hspace{0.1cm} y_d(j)\in\L^2(0,1;\mathbb{R}^m),
\end{align*}
by 
\begin{align*}
x_d(0)(k(\zeta)) &:= \lambda_0(\zeta)z_0(\zeta),\\
x_d(j)(\zeta) &:= f(j+\zeta),\hspace{0.5cm} j\geq 1,\\
d_d(j)(\zeta) &:= d((j+\zeta)p(1)),\hspace{0.5cm} j\in\mathbb{N},\\
u_d(j)(\zeta) &:= u((j+\zeta)p(1)),\hspace{0.5cm} j\in\mathbb{N},\\
z_d(j)(\zeta) &:= z((j+\zeta)p(1)),\hspace{0.5cm} j\in\mathbb{N},\\
y_d(j)(\zeta) &:= y((j+\zeta)p(1)),\hspace{0.5cm} j\in\mathbb{N}.
\end{align*}
Remark that, for any $t\geq 0$, we can find a unique $j\in\mathbb{N}$ and $\zeta\in [0,1]$ such that $j+\zeta = p(1)^{-1}t$. With this definition, we get 
\begin{align*}
f(p(1)^{-1}t) &= f(j+\zeta) = x_d(j)(\zeta),\\
d(t) &= d((j+\zeta)p(1)) = d_d(j)(\zeta),\\
u(t) &= u((j+\zeta)p(1)) = u_d(j)(\zeta),\\
z(t) &= z((j+\zeta)p(1)) = z_d(j)(\zeta),\\
y(t) &= y_d(j)(\zeta),
\end{align*}
which allows us to write equivalently \eqref{State_Equiv_f}, \eqref{Output_Reg_Equiv_f} and \eqref{Output_Equiv_f} as \eqref{State_Discrete}, \eqref{Output_Reg_Discrete} and \eqref{Output_Discrete}, respectively.
\end{proof}

\begin{remark}
The discrete-time representation \eqref{State_Discrete}--\eqref{Output_Discrete} is useful for studying system theoretic properties of \eqref{StateEquation_Diag_Uniform}--\eqref{Output_Diag_Uniform} such as internal stability of the dynamics. In particular, the $C_0$-semigroup generated by the operator $A$ is exponentially stable if and only if the matrix $A_d$ has its spectral radius less than 1, see e.g. \cite{HastirJacobZwart_Riesz} or \cite{JacobMorrisZwart_zeroDynamics}. 
\end{remark}


\section{\texorpdfstring{Suboptimal $H^\infty$}{}-control for discrete-time finite-dimensional systems}\label{Hinf_Discrete}

In this section, we show how the solution of the suboptimal $H^\infty$-control problem can be computed for finite-dimensional discrete-time systems. Let $\Sigma$ and $\Sigma_c$ be the systems in Figure \ref{fig:Diagram_Closed_Loop} and let us denote the closed-loop system by $\Sigma_{cl}$.

We assume that $\Sigma$, see Figure \ref{fig:Diagram_Open_Loop}, has the following structure
\begin{equation}
  \Sigma:\left\{\begin{array}{l}
    x(j+1) = A_d x(j) + B_{1,d}d(j) + B_{2,d} u(j)\\
    z(j) = C_{1,d} x(j) + D_{11,d}d(j) + D_{12,d} u(j)\\
    y(j) = C_{2,d} x(j) + D_{21,d}d(j) + D_{22,d} u(j),  
  \end{array}\right.
  \label{Sigma_Block}
\end{equation}
where, for $j\in\mathbb{N}$, $x(j)\in\mathbb{R}^n, d(j)\in\mathbb{R}^k, u(j)\in\mathbb{R}^p, z(j)\in\mathbb{R}^l$ and $y(j)\in\mathbb{R}^m$. $A_d, B_{i,d}, C_{i,d}$ and $D_{\alpha\beta,d}, i=1,2, \alpha = 1, 2, \beta=1, 2$ are matrices of appropriate dimensions. We emphasize that, despite we used the same notations as in Section \ref{SystProp}, the matrices depicted in \eqref{Sigma_Block} are not necessarily the same. We kept the same notation because it will be useful later in the manuscript. Following the lines of \cite[Chapter 10]{IonescuWeiss_99}, the dynamic controller $\Sigma_c$, as depicted in Figure \ref{fig:Diagram_Closed_Loop}, is assumed to be of the form
\begin{equation}
  \Sigma_c:\left\{\begin{array}{l}
    x_c(j+1) = A_c x_c(j) + B_c u_c(j)\\
    y_c(j) = C_c x_c(j) + D_c u_c(j),  
  \end{array}\right.
  \label{Sigma_c_Block}
\end{equation}
where $A_c, B_c, C_c$ and $D_c$ are matrices that need to be designed in order for the submoptimal $H^\infty$-control problem to be solved. In the controller $\Sigma_c$, it is assumed that $u_c(j)\in\mathbb{R}^m$ and $y_c(j)\in\mathbb{R}^p$. The vector $x_c(j)$ could be of any size. For this reason, we do not specify it. The interconnection between $\Sigma$ and $\Sigma_c$ is obtained by setting $u_c = y$ and $y_c = u$. This gives
\begin{equation}
  \begin{array}{l}
    \left(\begin{matrix}x(j+1)\\x_c(j+1)\end{matrix}\right) = \left(\begin{matrix}A_d & 0\\ 0 & A_c\end{matrix}\right) \left(\begin{matrix}x(j)\\x_c(j)\end{matrix}\right) + \left(\begin{matrix}B_{1,d}d(j) + B_{2,d} u(j)\\ B_c y(j)\end{matrix}\right)\\
    z(j) = C_{1,d}x(j) + D_{11,d}d(j) + D_{12,d}u(j)\\
    y(j) = C_{2,d}x(j) + D_{21,d}d(j) + D_{22,d}u(j)\\
    u(j) = C_c x_c(j) + D_c y(j).
  \end{array}
  \label{prelim_closed-loop}
\end{equation}
As the closed-loop system has input $d$ and output $z$, we need to eliminate the variables $u(j)$ and $y(j)$ from the above equations. Combining the two last equations of \eqref{prelim_closed-loop} gives
\begin{equation*}
\left(\begin{matrix}I & -D_c\\-D_{22,d} & I\end{matrix}\right)\left(\begin{matrix}u(j)\\ y(j)\end{matrix}\right) = \left(\begin{matrix}0 & C_c\\ C_{2,d} & 0\end{matrix}\right)\left(\begin{matrix}x(j)\\ x_c(j)\end{matrix}\right) + \left(\begin{matrix}0\\ D_{21,d}\end{matrix}\right)d(j),
\end{equation*}
which shows that the invertibility of the matrix $I-D_{22,d}D_c$ (equivalently the matrix $I-D_cD_{22,d}$) is an equivalent condition for the closed-loop system to be well-posed. We shall make this assumption from now on. 
\begin{proposition}\label{rem:invertibility_S}
The invertibility of $\left(\begin{smallmatrix}I & -D_c\\-D_{22,d} & I\end{smallmatrix}\right)$ is equivalent to the invertibility of $I-D_{22,d}D_c$ (equivalently to the invertibility of $I-D_cD_{22,d}$).
\end{proposition}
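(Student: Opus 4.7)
The statement is a purely linear-algebraic identity about block $2\times 2$ matrices, so my plan is to prove both equivalences by exhibiting explicit block-triangular factorizations, which is essentially a Schur-complement argument.

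For the first equivalence — that the block matrix is invertible if and only if $I - D_{22,d} D_c$ is invertible — I would use the factorization
\[
\begin{pmatrix} I & -D_c \\ -D_{22,d} & I \end{pmatrix}
= \begin{pmatrix} I & 0 \\ -D_{22,d} & I \end{pmatrix}
  \begin{pmatrix} I & -D_c \\ 0 & I - D_{22,d} D_c \end{pmatrix},
\]
verified by direct block multiplication. The first factor is block unit-lower-triangular and therefore always invertible (its inverse is obtained simply by flipping the sign of the off-diagonal block). The second factor is block upper-triangular with diagonal blocks $I$ and $I - D_{22,d} D_c$, hence it is invertible if and only if $I - D_{22,d} D_c$ is invertible. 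Since a product of square matrices is invertible if and only if each factor is, the claim follows.

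For the ``equivalently'' clause — that $I - D_{22,d} D_c$ is invertible if and only if $I - D_c D_{22,d}$ is invertible — I would apply the mirror factorization
\[
\begin{pmatrix} I & -D_c \\ -D_{22,d} & I \end{pmatrix}
= \begin{pmatrix} I - D_c D_{22,d} & -D_c \\ 0 & I \end{pmatrix}
  \begin{pmatrix} I & 0 \\ -D_{22,d} & I \end{pmatrix},
\]
whose right-hand factor is again always invertible. By the same triangular-block argument the full matrix is invertible if and only if $I - D_c D_{22,d}$ is invertible. Combining the two factorizations yields invertibility of $I - D_{22,d} D_c$ iff invertibility of $I - D_c D_{22,d}$, as required.

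There is essentially no hard step here — the main obstacle is simply being careful that the various identity matrices refer to identities of possibly different sizes ($m \times m$ opposite $D_{22,d}$ and $p \times p$ opposite $D_c$), and verifying the two factorizations without sign errors. Everything else is transparent block matrix algebra, and the argument makes no use of the specific origin of $D_c$ and $D_{22,d}$.
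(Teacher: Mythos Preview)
Your proof is correct and follows essentially the same Schur-complement idea as the paper, which simply invokes the block-determinant identities $\det\left(\begin{smallmatrix}I & -D_c\\-D_{22,d} & I\end{smallmatrix}\right) = \det(I-D_{22,d}D_c) = \det(I-D_cD_{22,d})$. Your explicit block-triangular factorizations are precisely how one proves those determinant identities, so your argument is a slightly more unpacked version of the paper's.
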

\begin{proof}[Proof.]
The proof follows by noting that
\begin{align*}
  \mathrm{det}\left(\begin{smallmatrix}I & -D_c\\-D_{22,d} & I\end{smallmatrix}\right) = \mathrm{det}(I-D_{22,d}D_c)
\end{align*}
and 
\begin{align*}
  \mathrm{det}\left(\begin{smallmatrix}I & -D_c\\-D_{22,d} & I\end{smallmatrix}\right) = \mathrm{det}(I-D_cD_{22,d}).
\end{align*}
\end{proof} 
Note that, in the case where $I - D_cD_{22,d}$ (equivalently, $I-D_{22,d}D_c$) is invertible, there holds 
\begin{equation*}
(I - D_cD_{22,d})^{-1} = I + D_c(I-D_{22,d}D_c)^{-1}D_{22,d}.
\end{equation*}

We shall denote by $S$ and $\tilde{S}$ the matrices $I-D_{22,d}D_c$ and $I-D_cD_{22,d}$, respectively. Taking Proposition \ref{rem:invertibility_S} into account, the dynamics of the closed-loop may be written as 
\begin{equation}
  \Sigma_{cl}:\left\{\begin{array}{l}
    \left(\begin{matrix}x(j+1)\\x_c(j+1)\end{matrix}\right) =  A_{cl}\left(\begin{matrix}x(j)\\x_c(j)\end{matrix}\right) + B_{cl}d(i)\\
    z(j) = C_{cl}\left(\begin{matrix}x(j)\\x_c(j)\end{matrix}\right) + D_{cl}d(j),
  \end{array}\right.
  \label{Sigma_closed-loop}
\end{equation}
where the matrices $A_{cl}, B_{cl}, C_{cl}$ and $D_{cl}$ are given by 
\begin{align}
A_{cl} &= \left(\begin{matrix}A_d + B_{2,d}\tilde{S}^{-1}D_cC_{2,d} & B_{2,d}\tilde{S}^{-1}C_c\\ B_cS^{-1}C_{2,d} & A_c+B_cS^{-1}D_{22,d}C_c\end{matrix}\right),\label{A_cl}\\
B_{cl} &= \left(\begin{matrix}B_{1,d} + B_{2,d}\tilde{S}^{-1}D_cD_{21,d}\\ B_c S^{-1}D_{21,d}\end{matrix}\right),\label{B_cl}\\
C_{cl} &= \left(\begin{matrix}C_{1,d}+D_{12,d}D_cS^{-1}C_{2,d} & D_{12,d}\tilde{S}^{-1}C_c\end{matrix}\right),\label{C_cl}\\
D_{cl} &= (D_{11,d} + D_{12,d}D_cS^{-1}D_{21,d}),\label{D_cl}
\end{align}
respectively. In the langage of \cite[Chapter 2]{IonescuWeiss_99}, it is said that the closed-loop system as presented in \eqref{Sigma_closed-loop} is the \textit{linear fractional transormation} (short LFT) of $\Sigma$ with $\Sigma_c$. We shall use this concept later in the main theorem of this section.

In what follows, the notation $\mathbb{D}$ stands for the open unit disc of the complex plane and ``$\complement$" denotes the complement of a set.

The suboptimal $H^\infty$-control problem for $\Sigma$ is defined as follows.
\begin{definition}\label{Def:Hinf_SubOpt}
  Given a positive parameter $\gamma$, the suboptimal $H^\infty$-control problem for $\Sigma$ is solved if and only if the three following conditions are satisfied:
  \begin{itemize}
    \item The closed-loop system is well-defined, that is, the matrix $\left(\begin{smallmatrix}I & -D_c\\-D_{22,d} & I\end{smallmatrix}\right)$ is invertible;
    \item $\Sigma_{cl}$ is internally stable, that is, the matrix $A_{cl}$ is stable in the sense that $r(A_{cl})<1$ where $r$ is the spectral radius;
    \item The transfer function $\mathbf{G}_{cl}$ of $\Sigma_{cl}$ that goes from $d$ to $z$ satisfies $\mathbf{G}_{cl}\in H^\infty(\overline{\mathbb{D}}^\complement,\mathbb{C}^{m\times p})$ and has $H^\infty$-norm less than $\gamma$.
  \end{itemize}
  \end{definition}
  
  The Hardy space $H^\infty(\overline{\mathbb{D}}^\complement,\mathbb{C}^{m\times p})$ consists of functions $\mathbf{G}:\overline{\mathbb{D}}^\complement\to\mathbb{C}^{m\times p}$ whose elements are analytic on $\overline{\mathbb{D}}^\complement$ and that satisfy $\sup_{s\in\overline{\mathbb{D}}^\complement} \Vert \mathbf{G}(s)\Vert < \infty$. The $H^\infty$-norm of $\mathbf{G}\in H^\infty(\overline{\mathbb{D}}^\complement,\mathbb{C}^{m\times p})$ is defined as
  \begin{equation*}
    \Vert \mathbf{G}\Vert_{\infty,\overline{\mathbb{D}}^\complement} := \sup_{\theta\in [0,2\pi]}\Vert \mathbf{G}(e^{i\theta})\Vert,
  \label{Hinf_norm_Discr_Finite}  
  \end{equation*}
  Note that the shortcut $H^\infty(\overline{\mathbb{D}}^\complement)$ will be used to avoid heavy notations.

\subsection{Assumptions}
We give here the assumptions that are needed in order to solve the suboptimal $H^\infty$-control problem, see \cite[Section 10.11]{IonescuWeiss_99}. First we comment on two basic assumptions that are made in \cite[Chapter 10]{IonescuWeiss_99}.

\begin{remark}\label{rem:D_22_nonzero}
\begin{itemize} 
\item It is assumed in \cite[Chapter 10]{IonescuWeiss_99} that $\gamma = 1$ holds without loss of generality. Indeed, as explained in \cite[Chapter 10]{IonescuWeiss_99}, if $\gamma\neq 1$, the original system can always be rescaled in such a way that the solution to the suboptimal $H^\infty$-control problem for the rescaled system with $\gamma=1$ gives the solution to the suboptimal $H^\infty$-control problem for the original system with the original $\gamma$. The matrices of the rescaled system are defined as
\begin{equation}
\begin{array}{lll}
A_d^{scaled} := A_d, & B_{1,d}^{scaled} := \gamma^{-\frac{1}{2}}B_{1,d}, & B_{2,d}^{scaled} := \gamma^\frac{1}{2}B_{2,d},\\
C_{1,d}^{scaled} := \gamma^{-\frac{1}{2}}C_{1,d}, & C_{2,d}^{scaled} := \gamma^{\frac{1}{2}}C_{2,d}, & D_{11,d}^{scaled} := \gamma^{-1}D_{11,d},\\
D_{12,d}^{scaled} := D_{12,d}, & D_{21,d}^{scaled} := D_{21,d}, & D_{22,d}^{scaled} := \gamma D_{22,d}.
\end{array}
\label{Matrices_Scaled}
\end{equation}
The solution to the suboptimal $H^\infty$-control problem for \eqref{Matrices_Scaled} with $\gamma = 1$ gives rise to a controller $\Sigma_c$ whose transfer function is given by $\mathbf{G}_c^{scaled} := \gamma^{-1}\mathbf{G}_c$, where $\mathbf{G}_c$ is the transfer function that would have been obtained by solving the suboptimal $H^\infty$-control problem for the original system. Moreover, the closed-loop transfer function for the rescaled system is given by $\mathbf{G}_{cl}^{scaled} := \gamma^{-1}\mathbf{G}_{cl}$ and it has $H^\infty$-norm less than $1$, which implies that $\mathbf{G}_{cl}$ has $H^\infty$-norm less than $\gamma$, as desired.
\item In \cite[Section 10.11]{IonescuWeiss_99} is the assumption that the matrix $D_{22,d}$ is the null matrix. It is mentioned that this assumption can be made without loss of generality. Indeed, in the case where $D_{22,d}\neq 0$, the suboptimal $H^\infty$-control problem has to be solved in the same way as if $D_{22,d} = 0$. This gives rise to a controller $\Sigma_c$ whose transfer function is denoted by $\mathbf{G}_c$. Then, the controller for the case with $D_{22,d}\neq 0$ is given by $\mathbf{G}_c(I + D_{22,d}\mathbf{G}_c)^{-1}$.
\end{itemize}
\end{remark}

The following assumption is needed to ensure the existence of stabilizing controllers.

\begin{assumption}\label{Assum:Stab_Detec}
The pair $(A_d,B_{2,d})$ is stabilizable and the pair $(C_{2,d},A_d)$ is detectable.
\end{assumption}

Now we make the two following regularity assumptions, see \cite[Section 10.11]{IonescuWeiss_99}.

\begin{assumption}\label{Assum:Regularity}
  For every $\theta\in [0,2\pi]$, the two following rank conditions are satisfied:
  \begin{itemize}
    \item[1.] \begin{equation*}
      \rank \left(\begin{matrix}e^{i\theta}-A_d & -B_{2,d}\\
      -C_{1,d} & -D_{12,d}\end{matrix}\right) = n + p.
    \end{equation*}
    \item[2.] \begin{equation*}
    \rank\left(\begin{matrix}e^{i\theta}-A_d & -B_{1,d}\\
      -C_{2,d} & -D_{21,d}\end{matrix}\right) = n + k.
    \end{equation*}
  \end{itemize}
  This has the consequence that there are more regulated outputs than controlled inputs and that the number of external inputs is larger than the number of measured outputs.
\end{assumption}

\subsection{The solution}

Before presenting the solution, we need to introduce the following concept, see \cite[Chapter 3 and Section 10.12]{IonescuWeiss_99}.

\begin{definition}\label{Def:KSPYS}
Let $\Sigma_* = (A_d, B_d, Q, L, R)$ with $A_d\in\mathbb{R}^{n\times n}, B_d\in\mathbb{R}^{n\times m}, Q\in\mathbb{R}^{n\times n}, L\in\mathbb{R}^{n\times m}, R\in\mathbb{R}^{m\times m}$ and $Q = Q^T, R = R^T$, and let $J = \left(\begin{smallmatrix}-I_{m_1} & 0\\0 & I_{m_2}\end{smallmatrix}\right)$ with $m_1+m_2 = m$. The \textit{Kalman-Szego-Popov-Yakubovich} system associated with $\Sigma_*$ and $J$ has a solution $(X,V,W)$ with $X\in\mathbb{R}^{n\times n}, V\in\mathbb{R}^{m\times m}$ and $W\in\mathbb{R}^{m\times n}$ if
\begin{align}
R + B_d^TXB_d &= V^TJV\label{EqV_KSPYS}\\
L + A_d^TXB_d &= W^TJV\label{EqWV_KSPYS}\\
Q + A_d^TXA_d-X &= W^TJW\label{EqW_KSPYS},
\end{align}
with $R + B_d^TXB_d$ nonsingular. Moreover, the solution is called stabilizing if the matrix
\begin{equation*}
A_d - B_d\left(R + B_d^TXB_d\right)^{-1}\left(L^T + B_d^TXA_d\right)
\end{equation*}
is stable. 
\end{definition}

\begin{remark}
According to \cite[Remark 3.6.9]{IonescuWeiss_99}, if $(X = X^T, V, W)$ is a solution to \eqref{EqV_KSPYS}--\eqref{EqW_KSPYS}, then $X$ is a symmetric solution to the following algebraic Riccati equation 
\begin{equation}
A_d^TXA_d - X + Q = (L + A_d^TXB_d)(R + B_d^TXB_d)^{-1}(L^T + B_d^TXA_d).\label{Riccati_KSPYS}
\end{equation}
This can be seen by eliminating the matrix $V$ in \eqref{EqV_KSPYS}--\eqref{EqWV_KSPYS}, by using the fact that $R + B_d^TXB_d$ is nonsingular. In addition, if $X$ is a symmetric solution to \eqref{Riccati_KSPYS}, then we can choose a $V$ such that \eqref{EqV_KSPYS} is satisfied, provided that $\sign(R + B_d^TXB_d) = J$. Then, a matrix $W$ can be defined such that \eqref{EqWV_KSPYS} is satisfied. The so-defined matrix $W$ will necessarily satisfy \eqref{EqW_KSPYS}. In summary, the Kalman-Szego-Popov-Yakubovich system associated with $\Sigma_*$ and $J$ has a solution $(X = X^T, V, W)$ if and only if $X$ satisfies the algebraic Riccati equation \eqref{Riccati_KSPYS} with $\sign(R + B_d^TXB_d) = J$. Note that, for a given Hermitian matrix $P$, $\sign(P)$ is defined as 
\begin{equation*}
\sign(P) = \left(\begin{matrix}-I_{n_-} & 0 & 0\\
0 & I_{n_+} & 0\\
0 & 0 & O_{n_0}\end{matrix}\right),
\end{equation*}
with $n_-, n_+$ and $n_0$ being the number of negative, positive and zero eignevalues of $P$, respectively.
\end{remark}

The following theorem gives a characterization of the solution of the suboptimal $H^\infty$-control problem for $\Sigma$ \eqref{Sigma_Block} with $\gamma = 1$, see \cite[Theorem 10.12.1]{IonescuWeiss_99}.

\begin{theorem}\label{Thm_Hinf_Discr_Finite}
Let $\Sigma_* := (A_d,\left(\begin{matrix}B_{1,d} & B_{2,d}\end{matrix}\right), Q_c, L_c, R_c)$ and $\Sigma_o := (A_d^T,\left(\begin{matrix}C_{1,d}^T & C_{2,d}^T\end{matrix}\right), Q_o, L_o, R_o)$, where 
\begin{equation*}
\begin{array}{lll}
Q_c = C_{1,d}^TC_{1,d}, & L_c = C_{1,d}^T\left(\begin{smallmatrix}D_{11,d} & D_{12,d}\end{smallmatrix}\right), & R_c = \left(\begin{smallmatrix}D_{11,d}^T\\ D_{12,d}^T\end{smallmatrix}\right)\left(\begin{smallmatrix}D_{11,d} & D_{12,d}\end{smallmatrix}\right) - \left(\begin{smallmatrix}I_l & 0\\ 0 & 0\end{smallmatrix}\right),\\
Q_o = B_{1,d}B_{1,d}^T, & L_o = B_{1,d}\left(\begin{smallmatrix}D_{11,d}^T & D_{21,d}^T\end{smallmatrix}\right), & R_o = \left(\begin{smallmatrix}D_{11,d}\\ D_{21,d}\end{smallmatrix}\right)\left(\begin{smallmatrix}D_{11,d}^T & D_{21,d}^T\end{smallmatrix}\right) - \left(\begin{smallmatrix}I_k & 0\\ 0 & 0\end{smallmatrix}\right).
\end{array}
\end{equation*}
The suboptimal $H^\infty$-control problem formulated in Definition \ref{Def:Hinf_SubOpt} has a solution with $\gamma=1$ if and only if the following conditions are satisfied
\begin{itemize}
\item[a.] The Kalman-Szego-Popov-Yakubovich system associated with $\Sigma_*$ and $J_* = \left(\begin{smallmatrix}-I_l & 0\\0 & I_m\end{smallmatrix}\right)$ has a stabilizing solution $(X,V_c,W_c)$ with\footnote{The splitting in $V_c$ and $W_c$ is such that $V_{c,11}\in\mathbb{R}^{l\times l}, V_{c,21}\in\mathbb{R}^{m\times l}, V_{c,22}\in\mathbb{R}^{m\times m}$ and $W_{c,1}\in\mathbb{R}^{l\times n}, W_{c,2}\in\mathbb{R}^{m\times n}$.} $V_c = \left(\begin{smallmatrix}V_{c,11} & 0\\ V_{c,21} & V_{c,22}\end{smallmatrix}\right)$ and $W_c = \left(\begin{smallmatrix}W_{c,1}\\ W_{c,2}\end{smallmatrix}\right)$ with $X\geq 0$;
\item[b.] The Kalman-Szego-Popov-Yakubovich system associated with $\Sigma_o$ and $J_o = \left(\begin{smallmatrix}-I_k & 0\\0 & I_p\end{smallmatrix}\right)$ has a stabilizing solution $(Y,V_o,W_o)$ with\footnote{The splitting in $V_o$ and $W_o$ is such that $V_{o,11}\in\mathbb{R}^{k\times k}, V_{o,21}\in\mathbb{R}^{p\times k}, V_{o,22}\in\mathbb{R}^{p\times p}$ and $W_{o,1}\in\mathbb{R}^{k\times n}, W_{o,2}\in\mathbb{R}^{p\times n}$.} $V_o = \left(\begin{smallmatrix}V_{o,11} & 0\\ V_{o,21} & V_{o,22}\end{smallmatrix}\right)$ and $W_o = \left(\begin{smallmatrix}W_{o,1}\\ W_{o,2}\end{smallmatrix}\right)$ with $Y\geq 0$;
\item[c.] The spectral radius $\rho(XY)$ is strictly less than $1$.
\end{itemize}
Assume that part a. is satisfied, let $J_\times = \left(\begin{smallmatrix}-I_m & 0\\ 0 & I_p\end{smallmatrix}\right)$ and consider 
\begin{align*}
  \Sigma_\times = \left(A_d^T + F_1^TB_{1,d}^T, \left(\begin{matrix}-F_2^TV_{c,22}^T & C_{2,d}^T + F_1^TD_{21,d}^T\end{matrix}\right),Q_\times, L_\times, R_\times\right),
\end{align*}
where $F = \left(\begin{smallmatrix}F_1\\ F_2\end{smallmatrix}\right)$ is the stabilizing feedback solution to the Kalman-Szego-Popov-Yakubovich system associated to $\Sigma_*$ and $J_*$ and
\begin{align*}
Q_\times &= B_{1,d}\left(V_{c,11}^TV_{c,11}\right)^{-1}(B_{1,d})^T,\\
L_\times &= B_{1,d}\left(V_{c,11}^TV_{c,11}\right)^{-1}\left(\begin{smallmatrix}V_{c,21}^T & D_{21,d}^T\end{smallmatrix}\right),\\
R_\times &= \left(\begin{smallmatrix}V_{c,21}\\ D_{21,d}\end{smallmatrix}\right)\left(V_{c,11}^TV_{c,11}\right)^{-1}\left(\begin{smallmatrix}V_{c,21}^T & D_{21,d}^T\end{smallmatrix}\right) - \left(\begin{smallmatrix}I_m & 0\\0 & 0\end{smallmatrix}\right).
\end{align*}
Now assume that parts a., b. and c. are satisfied. Then, the Kalman-Szego-Popov-Yakubovich system associated to $\Sigma_\times$ and $J_\times$ has a stabilizing solution $(Z,V_\times,W_\times)$ with\footnote{The splitting in $V_\times$ and $W_\times$ is such that $V_{\times,11}\in\mathbb{R}^{m\times m}, V_{\times,21}\in\mathbb{R}^{p\times m}, V_{\times,22}\in\mathbb{R}^{p\times p}$ and $W_{\times,1}\in\mathbb{R}^{m\times n}, W_{\times,2}\in\mathbb{R}^{p\times n}$.}  $V_\times = \left(\begin{smallmatrix}V_{\times,11} & 0\\ V_{\times,21} & V_{\times,22}\end{smallmatrix}\right), W_\times = \left(\begin{smallmatrix}W_{\times,1}\\ W_{\times,2}\end{smallmatrix}\right)$ and $Z = Y(I-XY)^{-1}\geq 0$. Let 
\begin{equation*}
\begin{array}{lll}
C_{2,F_1} &:= C_{2,d} + D_{21,d}F_1, & S_c := (V_{c,11}^TV_{c,11})^{-1},
S_\times := (V_{\times,11}^TV_{\times,11})^{-1}.
\end{array}
\end{equation*}
Then, the class of all solutions to the suboptimal $H^\infty$-control problem stated in Definition \ref{Def:Hinf_SubOpt} with $\gamma = 1$ can be expressed as 
\begin{align*}
\Sigma_c = (A_c,B_c,C_c,D_c) = \mathrm{LFT}(\Sigma_g,\Sigma_Q),
\end{align*}
where $\Sigma_Q = (A_Q,B_Q,C_Q,D_Q)$ is a discrete-time system, whose transfer function satisfies $\mathbf{G}_{Q}\in H^\infty(\overline{\mathbb{D}}^\complement)$ and $\Vert \mathbf{G}_Q\Vert_{\infty,\overline{\mathbb{D}}^\complement} < 1$. Here $\Sigma_g = \left(A_g,\left(\begin{smallmatrix}B_{g,1} & B_{g,2}\end{smallmatrix}\right),\left(\begin{smallmatrix}C_{g,1}\\C_{g,2}\end{smallmatrix}\right),\left(\begin{smallmatrix}D_{g,11} & D_{g,12}\\ D_{g,21} & D_{g,22}\end{smallmatrix}\right)\right)$ is a discrete-time system of the form \eqref{Sigma_c_Block}, with 
\begin{align*}
A_g &= A_d + B_{1,d}F_1 + B_{2,d}F_2 + B_{g,1}C_{2,F_1},\\
B_{g,1} &= -B_{1,d}S_cD_{21,d}^T\left(D_{21,d}S_cD_{21,d}^T + C_{2,F_1}ZC_{2,F_1}^T\right)^{-1} - B_{2,d}D_{g,11} - (A_d + B_{1,d}F_1)ZC_{g,2}^TD_{g,21},\\
B_{g,2} &= -B_{2,d}D_{g,12} - B_{1,d}S_c\left(V_{c,22}^{-1}V_{c,21} + D_{g,11}D_{21,d}\right)^TV_{c,22}^TS_\times^\frac{1}{2} - (A_d + B_{1,d}F_1)ZC_{g,1}^TV_{c,22}^TS_\times^\frac{1}{2},\\
C_{g,1} &= -F_2 + D_{g,11}C_{2,F_1},\\
C_{g,2} &= \left(D_{21,d}S_cD_{21,d}^T + C_{2,F_1}ZC_{2,F_1}^T\right)^{-\frac{1}{2}}C_{2,F_1},\\
D_{g,11} &= -\left(V_{c,22}^{-1}V_{c,21}S_cD_{21,d}^T - F_2ZC_{2,F_1}^T\right)\left(D_{21,d}S_cD_{21,d}^T + C_{2,F_1}ZC_{2,F_1}^T\right)^{-1},\\
D_{g,12} &= V_{c,22}^{-1}S_\times^{-\frac{1}{2}},\\
D_{g,21} &= \left(D_{21,d}S_cD_{21,d}^T + C_{2,F_1}ZC_{2,F_1}^T\right)^{-\frac{1}{2}},\\
D_{g,22} &= 0.
\end{align*}
\end{theorem}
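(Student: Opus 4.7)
The result is a discrete-time counterpart of the classical Doyle-Glover-Khargonekar-Francis $H^\infty$ characterization and, as the statement indicates, is taken essentially verbatim from \cite[Theorem 10.12.1]{IonescuWeiss_99}. My plan is therefore to reproduce the $J$-spectral factorization / KSPY-system route used there, adapted to the notation of the present manuscript. The overall architecture is the classical ``two-Riccati plus coupling'' picture: one KSPY system encodes the state-feedback (control) part, a dual KSPY system encodes the output-injection (observer) part, and the condition $\rho(XY)<1$ couples the two so that the state-space can be transformed in a way that makes the residual disturbance-to-error map strictly contractive.

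First I would show necessity and sufficiency of condition (a) by reducing $\Sigma$ to a \emph{full information} $H^\infty$ problem (i.e.\ assume $y=\bigl(\begin{smallmatrix}x\\d\end{smallmatrix}\bigr)$ is available). Under Assumption \ref{Assum:Regularity}.1, applying the Bounded Real Lemma to the closed-loop map with a static state-feedback $u=Fx$ shows that a contractive internally stabilizing feedback exists if and only if the KSPY system for $\Sigma_*$ with signature $J_*=\mathrm{diag}(-I_l,I_m)$ admits a stabilizing solution $X\ge 0$; the factors $(V_c,W_c)$ then produce the $J$-spectral factor of $R_c+B_d^T X B_d$ in the lower-triangular form displayed in the theorem, and $F=-\bigl(R_c+B_d^TXB_d\bigr)^{-1}(L_c^T+B_d^TXA_d)$ is the corresponding stabilizing feedback. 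Dualizing by applying the same argument to the adjoint system gives condition (b): the KSPY system for $\Sigma_o$ with signature $J_o=\mathrm{diag}(-I_k,I_p)$ has a stabilizing solution $Y\ge 0$, under Assumption \ref{Assum:Regularity}.2.

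Next I would combine the two solutions. After applying the state feedback $u=F_1 d+F_2 x+v$ (where $F$ is split conformably with $(B_{1,d},B_{2,d})$) and performing the $J$-spectral ``inner-outer'' factorization given by $(V_c,W_c)$, the closed-loop problem transforms into an output-estimation $H^\infty$ problem for an auxiliary plant $\Sigma_\times$. Solvability of this auxiliary problem with $\gamma=1$ is equivalent, via the standard separation argument, to the existence of a stabilizing solution $Z\ge 0$ of the KSPY system associated with $\Sigma_\times$ and $J_\times$. The algebraic identity $Z=Y(I-XY)^{-1}$ is then derived by substituting the explicit form of $\Sigma_\times$ into the Riccati equation for $Z$ and comparing with the Riccati equation satisfied by $Y$; well-posedness of this relation (positivity of $Z$) is exactly the coupling condition (c), $\rho(XY)<1$, which I would verify by writing $I-XY$ as a congruence transform of the Schur complement that appears in the joint KSPY system.

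Finally, the parameterization is obtained by applying the Youla-style LFT reconstruction for discrete-time systems. Starting from the central controller (the ``$Q=0$'' choice) one shows that every admissible controller is of the form $\Sigma_c=\mathrm{LFT}(\Sigma_g,\Sigma_Q)$, where $\Sigma_g$ is the ``generator'' built from $F$, $Z$, and the factors $V_c$, $V_\times$ as in the statement, and $\Sigma_Q$ is a free stable contraction in $H^\infty(\overline{\mathbb{D}}^{\complement})$ with norm strictly below $1$. The matrices $A_g,\,B_{g,\cdot},\,C_{g,\cdot},\,D_{g,\cdot}$ arise mechanically from: (i) inserting the stabilizing state feedback $F=(F_1,F_2)$ and the observer gain derived from $Z$ into the closed loop, (ii) absorbing the Cholesky-type factors $S_c^{1/2}$ and $S_\times^{1/2}$ coming from $V_c$ and $V_\times$ so that the input/output ``channels'' of the residual free parameter $\Sigma_Q$ are properly normalized, and (iii) imposing $D_{g,22}=0$ so that the LFT is always well-posed. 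I expect the main obstacle to be the bookkeeping in step (iii): verifying that the explicit formulas for $B_{g,1},B_{g,2},C_{g,1},C_{g,2}$ and the $D_{g,ij}$ are indeed forced by the $J$-spectral factorization and that no admissible controller is missed — this requires carefully tracking how the signature-constrained factorizations of $R_c+B_d^TXB_d$ and $R_\times+B^TZB$ propagate through the LFT, exactly as carried out in \cite[Chapters 3 and 10]{IonescuWeiss_99}.
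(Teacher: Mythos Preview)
The paper does not actually prove this theorem: immediately after the statement it writes ``We will not give a proof of that theorem. Interested readers are referred to \cite[Theorem 10.12.1]{IonescuWeiss\_99}.'' Your proposal, which outlines the $J$-spectral factorization / KSPY-system route from \cite[Chapters 3 and 10]{IonescuWeiss_99}, is therefore consistent with the paper's own treatment, since both defer to the same source.
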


We will not give a proof of that theorem. Interested readers are referred to \cite[Theorem 10.12.1]{IonescuWeiss_99}. Nevertheless, the closed-loop system as the linear fractional of $\Sigma$ and $\Sigma_c$, where $\Sigma_c$ is the linear fractional transformation of $\Sigma_g$ and $\Sigma_Q$, is depicted in Figure \ref{fig:LFT_LFT_Closed-Loop}. 

\begin{figure}
  \centering
  \includegraphics[scale=1.5]{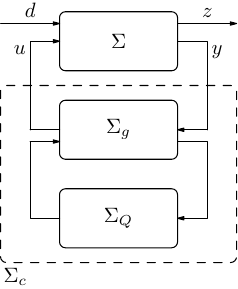}
  \caption{Closed-loop system resulting from the interconnection of $\Sigma$ and $\Sigma_c$ with $\Sigma_c$ being the \textit{left fractional transformation} of $\Sigma_g$ and $\Sigma_Q$.\label{fig:LFT_LFT_Closed-Loop}}
\end{figure}

\begin{remark}\label{rem:ScaledSystem_Sigma_g}
\begin{itemize} 
  \item The system $\Sigma_g^{scaled}$ obtained by applying Theorem \ref{Thm_Hinf_Discr_Finite} to the scaled system \eqref{Matrices_Scaled} can be expressed in terms of the system $\Sigma_g$ obtained for the original system $\Sigma$ in the following way
\begin{equation}
  \begin{array}{lll}
  A_g^{scaled} := A_g, & B_{g,1}^{scaled} := \gamma^{-\frac{1}{2}}B_{g,1}, & B_{g_2}^{scaled} := \gamma^\frac{1}{2}B_{g,2},\\
  C_{g,1}^{scaled} := \gamma^{-\frac{1}{2}}C_{g,1}, & C_{g,2}^{scaled} := \gamma^{\frac{1}{2}}C_{g,2}, & D_{g,11}^{scaled} := \gamma^{-1}D_{g,11},\\
  D_{g,12}^{scaled} := D_{g,12}, & D_{g,21}^{scaled} := D_{g,21}, & D_{g,22}^{scaled} := \gamma D_{g,22}.
  \end{array}
  \label{Matrices_Sigma_g_Scaled}
  \end{equation}
Moreover, the transfer function of the system $\Sigma_Q$ needed for the original system can be choosen such that $\mathbf{G}_Q = \gamma\mathbf{G}_Q^{scaled}$, where $\mathbf{G}_Q^{scaled}$ is the transfer function of $\Sigma_Q^{scaled}$ that is choosen in Theorem \ref{Thm_Hinf_Discr_Finite} with $\Vert \mathbf{G}_Q^{scaled}\Vert_{\infty,\overline{\mathbb{D}}^\complement} < 1$.
\item The fact that the matrices $V_c, V_o$ and $V_\times$ can be chosen as lower triangular comes from \cite[Theorem 4.12.9]{IonescuWeiss_99}.
\end{itemize}
\end{remark}


\section{Solution in continuous-time}\label{Solution_Conti}

By taking advantage of the previously described solution to the suboptimal $H^\infty$-control problem for discrete-time finite-dimensional systems, we derive the solution to the suboptimal $H^\infty$-control problem for \eqref{StateEquation_Diag_Uniform}--\eqref{Output_Diag_Uniform} in this section. First note that, by assuming the invertibility of $K$, \eqref{StateEquation_Diag_Uniform}--\eqref{Output_Diag_Uniform} may be written equivalently as 
\begin{align}
  \frac{\partial x}{\partial t}(\zeta,t) &= -\frac{\partial}{\partial \zeta}(\lambda_0(\zeta)x(\zeta,t)),\label{eq:PDEs}\\
  x(\zeta,0) &= x_0(\zeta).\label{StateEquation_Diag_Uniform_Chg}\\
  B_{1,d}d(t) + B_{2,d}u(t) &= \lambda_0(0)x(0,t) - A_d\lambda_0(1)x(1,t),\label{Input_Diag_Uniform_Chg}\\
  z(t) &= C_{1,d}\lambda_0(1)x(1,t) + D_{11,d}d(t) + D_{12,d}u(t)\label{Output_Reg_Diag_Uniform_Chg}\\
  y(t) &= C_{2,d}\lambda_0(1)x(1,t) + D_{21,d}d(t) + D_{22,d}u(t),\label{Output_Diag_Uniform_Chg}
  \end{align}
where the matrices \eqref{Matrices_Discrete} have been used. Before giving the formulation of the suboptimal $H^\infty$-control problem for \eqref{StateEquation_Diag_Uniform_Chg}--\eqref{Output_Diag_Uniform_Chg}, one may ask the following question ``\textit{what could be the form of a dynamic controller $\Sigma_c$ that would be connected to \eqref{eq:PDEs}--\eqref{Output_Diag_Uniform_Chg} as described in Figure \ref{fig:Diagram_Closed_Loop} ?}"

To provide an answer, observe that according to Lemma \ref{lem:discrete-representation}, \eqref{StateEquation_Diag_Uniform_Chg}--\eqref{Output_Diag_Uniform_Chg} may be written equivalently as a discrete-time system where the operators are multiplication operators by constant matrices. Moreover, Theorem \ref{Thm_Hinf_Discr_Finite} gives a parameterization of all the controllers that solves the suboptimal $H^\infty$-control problem described in Definition \ref{Def:Hinf_SubOpt}. According to Section \ref{Hinf_Discrete}, those stabilizing controllers have a representation as finite-dimensional discrete-time systems. For this reason, it is reasonable to think that a dynamic controller given by an infinite-dimensional discrete-time whose operators are multiplication operators by constant matrices would be suitable to solve the suboptimal $H^\infty$-control problem for \eqref{StateEquation_Diag_Uniform_Chg}--\eqref{Output_Diag_Uniform_Chg}. Moreover, as has been pointed out in \cite{CurtainZwart2020}, the class of multiplication operators is able to provide the solution to many different problems for some classes of systems such as the class of spatially invariant systems, see e.g. \cite[Theorem 4.1.4, Lemma 4.3.5, Lemma 9.2.12]{CurtainZwart2020}. Using Lemma \ref{lem:discrete-representation} going from discrete-time to continuous-time, such controllers would have the following form
\begin{align}
  \frac{\partial x_c}{\partial t}(\zeta,t) &= -\frac{\partial}{\partial \zeta}(\lambda_0(\zeta)x_c(\zeta,t)),\,\,\,\,\,\,x_c(\zeta,0) = x_{0,c}(\zeta),\label{State_Controller_PDE}\\
  B_c u_c(t) &= \lambda_0(0)x_c(0,t) - A_c\lambda_0(1)x_c(1,t),\label{Input_Controller_PDE}\\
  y_c(t) &= C_c\lambda_0(1)x_c(1,t) + D_c u_c(t),\label{Output_Controller_PDE}
\end{align}
where $A_c, B_c, C_c$ and $D_c$ are matrices that need to be defined. In \eqref{State_Controller_PDE}--\eqref{Output_Controller_PDE}, the input $u_c(t)$ is the output $y(t)$ of the open-loop system \eqref{StateEquation_Diag_Uniform_Chg}--\eqref{Output_Diag_Uniform_Chg} and the output $y_c(t)$ is the input $u(t)$ of the open-loop system \eqref{StateEquation_Diag_Uniform_Chg}--\eqref{Output_Diag_Uniform_Chg}.

Before giving the definition of suboptimal $H^\infty$-control in this context and before stating the main theorem of this section, let us have a look at the interconnection between \eqref{StateEquation_Diag_Uniform_Chg}--\eqref{Output_Diag_Uniform_Chg} and \eqref{State_Controller_PDE}--\eqref{Output_Controller_PDE}. 
\begin{proposition}\label{Prop:closed-loop_PDE}
Under the assumption that the matrix $S:= I-D_{22,d}D_c$ is invertible, the closed-loop system obtained from \eqref{StateEquation_Diag_Uniform_Chg}--\eqref{Output_Diag_Uniform_Chg} and \eqref{State_Controller_PDE} and the interconnection $u_c(t) = y(t)$ and $y_c(t) = u(t)$, is given by
\begin{align}
\frac{\partial}{\partial t}\left(\begin{matrix}x(\zeta,t)\\ x_c(\zeta,t)\end{matrix}\right) &= -\frac{\partial}{\partial \zeta}\lambda_0(\zeta)\left(\begin{matrix}x(\zeta,t)\\ x_c(\zeta,t)\end{matrix}\right),\,\,\,\,\,\,\,
\left(\begin{matrix}x(\zeta,0)\\x_c(\zeta,0)\end{matrix}\right) = \left(\begin{matrix}x_0(\zeta)\\ x_{0,c}(\zeta)\end{matrix}\right),\label{StateEquation_Closed-loop_PDE}\\
B_{cl}d(t) &= \lambda_0(0)\left(\begin{matrix}x(0,t)\\ x_c(0,t)\end{matrix}\right) - A_{cl}\lambda_0(1)\left(\begin{matrix}x(1,t)\\ x_c(1,t)\end{matrix}\right),\label{Input_Closed-loop_PDE}\\
z(t) &= C_{cl}\lambda_0(1)\left(\begin{matrix}x(1,t)\\ x_c(1,t)\end{matrix}\right) + D_{cl}d(t),\label{Output_Closed-loop_PDE}
\end{align}
where the matrices $A_{cl}, B_{cl}, C_{cl}$ and $D_{cl}$ are given by 
\begin{align}
A_{cl} &= \left(\begin{matrix}A_d + B_{2,d}\tilde{S}^{-1}D_cC_{2,d} & B_{2,d}\tilde{S}^{-1}C_c\\ B_cS^{-1}C_{2,d} & A_c + B_cS^{-1} D_{22,d} C_c\end{matrix}\right)\label{A_cl_PDE}\\
B_{cl} &= \left(\begin{matrix}B_{1,d} + B_{2,d}\tilde{S}^{-1}D_cD_{21,d}\\ B_cS^{-1}D_{21,d}\end{matrix}\right)\label{B_cl_PDE}\\
C_{cl} &= \left(\begin{matrix}C_{1,d}+D_{12,d}D_cS^{-1}C_{2,d} & D_{12,d}\tilde{S}^{-1}C_c\end{matrix}\right)\label{C_cl_PDE}\\
D_{cl} &= \left(D_{11,d} + D_{12,d}D_cS^{-1}D_{21,d}\right),\label{D_cl_PDE}
\end{align}
with $\tilde{S} = I - D_cD_{22,d}$.
\end{proposition}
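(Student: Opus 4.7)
The approach is structural. Both the plant \eqref{eq:PDEs}--\eqref{Output_Diag_Uniform_Chg} and the controller \eqref{State_Controller_PDE}--\eqref{Output_Controller_PDE} share the identical transport dynamics $\partial_t x = -\partial_\zeta(\lambda_0 x)$, so stacking $(x,x_c)^\top$ immediately produces the evolution equation and the initial condition in \eqref{StateEquation_Closed-loop_PDE}. The content of the proposition therefore reduces to assembling the boundary/output relations of the plant and the controller together with the interconnection $u_c = y$, $y_c = u$, eliminating the signals $u$ and $y$, and reading off the expressions for $A_{cl}, B_{cl}, C_{cl}, D_{cl}$.

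The plan is to first collect the four algebraic relations at the boundary: the plant boundary equation \eqref{Input_Diag_Uniform_Chg}, the measurement equation \eqref{Output_Diag_Uniform_Chg}, the controller boundary equation \eqref{Input_Controller_PDE} (with $u_c$ replaced by $y$), and the controller output equation \eqref{Output_Controller_PDE} (with $y_c$ replaced by $u$). The two output-type relations, viewed as an algebraic system for $(u(t),y(t))$, take the form
\begin{equation*}
\begin{pmatrix}I & -D_c\\ -D_{22,d} & I\end{pmatrix}\begin{pmatrix}u(t)\\ y(t)\end{pmatrix} = \begin{pmatrix}0 & C_c\\ C_{2,d} & 0\end{pmatrix}\begin{pmatrix}\lambda_0(1)x(1,t)\\ \lambda_0(1)x_c(1,t)\end{pmatrix} + \begin{pmatrix}0\\ D_{21,d}\end{pmatrix}d(t),
\end{equation*}
which is precisely the algebraic system already encountered in Section \ref{Hinf_Discrete}. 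By Proposition \ref{rem:invertibility_S}, the invertibility of $S$ guarantees that the coefficient matrix is invertible, and a Schur-complement computation yields
\begin{equation*}
\begin{pmatrix}I & -D_c\\ -D_{22,d} & I\end{pmatrix}^{-1} = \begin{pmatrix}\tilde{S}^{-1} & \tilde{S}^{-1}D_c\\ D_{22,d}\tilde{S}^{-1} & S^{-1}\end{pmatrix},
\end{equation*}
so that $u(t)$ and $y(t)$ become explicit affine functions of $\lambda_0(1)x(1,t)$, $\lambda_0(1)x_c(1,t)$, and $d(t)$.

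It then remains to substitute these expressions back into the plant boundary equation, the controller boundary equation, and the regulated-output equation \eqref{Output_Reg_Diag_Uniform_Chg}. Doing so yields \eqref{Input_Closed-loop_PDE}--\eqref{Output_Closed-loop_PDE} with matrices of the exact form \eqref{A_cl_PDE}--\eqref{D_cl_PDE}; this is the very same linear-fractional-transformation calculation that produced \eqref{A_cl}--\eqref{D_cl} for the finite-dimensional closed loop, only with the finite-dimensional state replaced by the boundary trace $\lambda_0(1)(x(1,t),x_c(1,t))^\top$. The only minor subtlety is cosmetic: direct substitution naturally yields the $(2,2)$-block of $A_{cl}$ as $A_c + B_c D_{22,d}\tilde{S}^{-1}C_c$, whereas \eqref{A_cl_PDE} displays it as $A_c + B_c S^{-1}D_{22,d}C_c$. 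These agree through the push-through identity $D_{22,d}\tilde{S}^{-1} = S^{-1}D_{22,d}$, which follows immediately from $(I - D_{22,d}D_c)D_{22,d} = D_{22,d}(I - D_c D_{22,d})$; an analogous identity $\tilde{S}^{-1}D_c = D_c S^{-1}$ reconciles the corresponding $D_c$-terms appearing in $C_{cl}$ and $D_{cl}$. Since the PDE dynamics enter only through the trivial block-diagonal stacking and all remaining manipulations are purely matrix-algebraic, no real obstacle is expected.
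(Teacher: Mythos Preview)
Your proposal is correct and follows essentially the same route as the paper: both arguments stack the transport dynamics trivially, eliminate the internal signals $u(t)$ and $y(t)$ from the algebraic boundary/output relations, and invoke the push-through identities $D_cS^{-1}=\tilde{S}^{-1}D_c$ (equivalently $D_{22,d}\tilde{S}^{-1}=S^{-1}D_{22,d}$) and $\tilde{S}^{-1}=I+D_cS^{-1}D_{22,d}$ to reconcile the resulting block entries with \eqref{A_cl_PDE}--\eqref{D_cl_PDE}. The only cosmetic difference is that the paper first solves for $y(t)$ via $S^{-1}$ and then for $u(t)$, whereas you invert the $2\times 2$ block matrix $\left(\begin{smallmatrix}I & -D_c\\ -D_{22,d} & I\end{smallmatrix}\right)$ in one step via the Schur complement; the two computations are equivalent.
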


\begin{proof}[Proof.]
We will only present the main steps of the proof. From \eqref{Output_Diag_Uniform_Chg} and \eqref{Output_Controller_PDE}, we get that
\begin{align*}
\left(I-D_{22,d}D_c\right)y(t) = C_{2,d}\lambda_0(1)x(1,t) + D_{21,d}d(t)  + D_{22,d}C_c\lambda_0(1)x_c(1,t).
\end{align*}
According to the previous equation, \eqref{Input_Diag_Uniform_Chg}--\eqref{Output_Diag_Uniform_Chg} and \eqref{Input_Controller_PDE}--\eqref{Output_Controller_PDE} may be written equivalently as
\begin{align}
&B_{1,d}d(t) + B_{2,d}u(t) = \lambda_0(0)x(0,t) - A_d\lambda_0(1)x(1,t)\label{Inter_1}\\
&z(t) = C_{1,d}\lambda_0(1)x(1,t) + D_{11,d}d(t) + D_{12,d}u(t)\label{Inter_2}\\
&B_cS^{-1}\left(C_{2,d}\lambda_0(1)x(1,t) + D_{21,d}d(t)  + D_{22,d}C_c\lambda_0(1)x_c(1,t)\right) = \lambda_0(0)x_c(0,t) - A_cx_c(1,t)\label{Inter_3}\\
&u(t) = C_c\lambda_0(1)x_c(1,t) + D_cS^{-1}\left(C_{2,d}\lambda_0(1)x(1,t) + D_{21,d}d(t)  + D_{22,d}C_c\lambda_0(1)x_c(1,t)\right).\label{u(t)_inter}
\end{align}
It remains to eliminate the term $u(t)$ in the above equations. This can be done by injecting \eqref{u(t)_inter} in \eqref{Inter_1}--\eqref{Inter_3}. The proof concludes by noting that the two following relations 
\begin{align*}
  \tilde{S}^{-1} = I + D_cS^{-1}D_{22,d},\,\,\, D_cS^{-1} = \tilde{S}^{-1} D_c.
\end{align*}
hold true.
\end{proof}

The aim of this section is to show that we can always find a dynamic controller of the form \eqref{State_Controller_PDE}--\eqref{Output_Controller_PDE} that solves the suboptimal $H^\infty$-control problem for \eqref{StateEquation_Diag_Uniform_Chg}--\eqref{Output_Diag_Uniform_Chg}.

Given \eqref{State_Controller_PDE}--\eqref{Output_Controller_PDE}, we introduce hereafter a precise definition of what is meant by suboptimal $H^\infty$-control for \eqref{StateEquation_Diag_Uniform_Chg}--\eqref{Output_Diag_Uniform_Chg}.

\begin{definition}\label{Def:Hinf_Conti}
Given a positive parameter $\gamma$, the suboptimal $H^\infty$-control problem for \eqref{StateEquation_Diag_Uniform_Chg}--\eqref{Output_Diag_Uniform_Chg} consists in finding a dynamic controller of the form \eqref{State_Controller_PDE}--\eqref{Output_Controller_PDE} such that the closed-loop system has the following properties
\begin{itemize}
  \item it is well-defined, that is, the matrix $S$ (equivalently, $\tilde{S}$) is invertible;
  \item the $C_0$-semigroup generated by the closed-loop homogeneous dynamics is exponentially stable;
  \item the transfer function from the disturance $d$ to the \textit{to-be regulated} output $z$, denoted $\mathbf{G}_{cl}$, satisfies $\mathbf{G}_{cl}\in H^\infty(\mathbb{C}_0^+,\mathbb{C}^{m\times p})$ and has $H^\infty$-norm less than $\gamma$. 
\end{itemize}
\end{definition}
The Hardy space\footnote{The notation $\mathbb{C}_0^+$ stands for the open right-half plane of the complex plane.} $H^\infty(\mathbb{C}_0^+,\mathbb{C}^{m\times p})$ consists in functions $\mathbf{G}:\mathbb{C}_0^+\to\mathbb{C}^{m\times p}$ whose elements are analytic and bounded on $\mathbb{C}_0^+$. The $H^\infty$-norm of a function $\mathbf{G}\in H^\infty(\mathbb{C}_0^+,\mathbb{C}^{m\times p})$ is defined as
  \begin{equation}
  \Vert\mathbf{G}\Vert_{\infty,\mathbb{C}_0^+} := \sup_{\omega\in\mathbb{R}}\Vert \mathbf{G}(i\omega)\Vert. 
  \label{Hinf_Norm_Conti}
  \end{equation}
The shortcut $H^\infty(\mathbb{C}_0^+)$ is used in what follows.

The following theorem is the main theorem of this manuscript and relates the solution to the suboptimal $H^\infty$-control problem stated in Theorem \ref{Thm_Hinf_Discr_Finite} for finite-dimensional discrete-time systems and the solution to the suboptimal $H^\infty$-control problem described in Definition \ref{Def:Hinf_Conti}.

\begin{theorem}\label{Thm:Hinf_conti}
Consider the infinite-dimensional system \eqref{StateEquation_Diag_Uniform_Chg}--\eqref{Output_Diag_Uniform_Chg} with matrices $A_d, B_{1,d}, B_{2,d}, C_{1,d}$, $C_{2,d}, D_{11,d}, D_{12,d}, D_{21,d}$ and $D_{22,d}$ defined in \eqref{Matrices_Discrete}. Assume that the following rank conditions are satisfied
\begin{itemize}
  \item $\rank(\begin{smallmatrix}zI - A_d & B_{2,d}\end{smallmatrix}) = n$ for all $z\in\sigma(A_d)$ such that $\vert z\vert > 1$;
  \item $\rank(\begin{smallmatrix}zI - A_d\\ C_{2,d}\end{smallmatrix}) = n$ for all $z\in\sigma(A_d)$ such that $\vert z\vert > 1$;
  \item $\rank \left(\begin{smallmatrix}e^{j\theta}-A_d & -B_{2,d}\\
    -C_{1,d} & -D_{12,d}\end{smallmatrix}\right) = n + p$ for every $\theta\in [0,2\pi[$;
  \item $\rank\left(\begin{smallmatrix}e^{j\theta}-A_d & -B_{1,d}\\
    -C_{2,d} & -D_{21,d}\end{smallmatrix}\right) = n + k$ for every $\theta\in [0,2\pi[$.
\end{itemize}
Moreover, consider the matrices $A_c, B_c, C_c$ and $D_c$ obtained by applying Theorem \ref{Thm_Hinf_Discr_Finite} to the matrices $A_d, B_d, C_d$ and $D_d$ defined in \eqref{Matrices_Discrete}. Then the closed-loop system \eqref{StateEquation_Closed-loop_PDE}--\eqref{Output_Closed-loop_PDE} satisfies the properties described in Definition \ref{Def:Hinf_Conti} with $\gamma = 1$, that is, the suboptimal $H^\infty$-control problem is solved for \eqref{StateEquation_Diag_Uniform_Chg}--\eqref{Output_Diag_Uniform_Chg} with $\gamma=1$.
\end{theorem}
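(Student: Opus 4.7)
The natural strategy is to lift the PDE problem to the infinite-dimensional discrete-time representation of Lemma \ref{lem:discrete-representation}, in which every operator is a multiplication by a constant matrix, then invoke the finite-dimensional Theorem \ref{Thm_Hinf_Discr_Finite} on the underlying matrix system, and finally transfer the conclusions back through the bijective correspondence between the continuous-time and discrete-time transfer functions. The key observation making this transfer effortless is that, because the discrete-time operators act pointwise in $\zeta$, the discrete-time transfer function of the infinite-dimensional lift coincides with the finite-dimensional transfer function of the matrix tuple $(A_d,B_{1,d},B_{2,d},\dots)$.

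\textbf{Setup and application of the discrete result.} The candidate controller \eqref{State_Controller_PDE}--\eqref{Output_Controller_PDE} is itself a PDE in the class \eqref{StateEquation_Diag_Uniform}--\eqref{Output_Diag_Uniform} with $K=-I$ (hence invertible) and boundary matrices built from $(A_c,B_c,C_c,D_c)$; Lemma \ref{lem:discrete-representation} yields its discrete realization as exactly the finite-dimensional tuple $(A_c,B_c,C_c,D_c)$. Combining this with Lemma \ref{lem:discrete-representation} applied to the plant, and with Proposition \ref{Prop:closed-loop_PDE}, the closed-loop PDE \eqref{StateEquation_Closed-loop_PDE}--\eqref{Output_Closed-loop_PDE} is again of the PDE class with $K_{cl}=-I$ invertible and $L_{cl}=A_{cl}$, whose discrete-time lift has matrices $(A_{cl},B_{cl},C_{cl},D_{cl})$ matching \eqref{A_cl}--\eqref{D_cl}. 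The first two rank hypotheses of the theorem give the discrete-time Popov--Belevitch--Hautus conditions for $|z|>1$; combining them with the square rank conditions (hypotheses 3 and 4, which are Assumption \ref{Assum:Regularity}, and which in particular force the submatrix $(e^{i\theta}I-A_d\mid -B_{2,d})$ to have rank $n$ on the unit circle) yields Assumption \ref{Assum:Stab_Detec}. Theorem \ref{Thm_Hinf_Discr_Finite} then guarantees the existence of $(A_c,B_c,C_c,D_c)$ such that $\tilde S$ (equivalently $S$) is invertible, $r(A_{cl})<1$, and $\mathbf{G}_{cl,d}\in H^\infty(\overline{\mathbb{D}}^\complement)$ with $\Vert \mathbf{G}_{cl,d}\Vert_{\infty,\overline{\mathbb{D}}^\complement}<1$.

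\textbf{Transfer to continuous time.} Well-posedness of the closed-loop PDE is delivered by Proposition \ref{prop:well-posed} since $K_{cl}=-I$ is invertible, and exponential stability of the closed-loop $C_0$-semigroup follows from the remark after Lemma \ref{lem:discrete-representation} together with $r(A_{cl})<1$. For the $H^\infty$-bound I would apply the change of variables $f(k(\zeta)+p(1)^{-1}t)=\lambda_0(\zeta)\binom{x(\zeta,t)}{x_c(\zeta,t)}$ from the proof of Lemma \ref{lem:discrete-representation}, plug the ansatz $d(t)=d_0e^{st}$, $f(\eta)=f_0e^{sp(1)\eta}$, and solve the resulting algebraic system to obtain
\begin{equation*}
\mathbf{G}_{cl}(s) \;=\; C_{cl}\bigl(e^{sp(1)}I-A_{cl}\bigr)^{-1}B_{cl}+D_{cl} \;=\; \mathbf{G}_{cl,d}\bigl(e^{sp(1)}\bigr).
\end{equation*}
Since $s\mapsto e^{sp(1)}$ maps $\mathbb{C}_0^+$ analytically into $\overline{\mathbb{D}}^\complement$ and sends the imaginary axis surjectively onto the unit circle, the identity above transfers both the membership $\mathbf{G}_{cl}\in H^\infty(\mathbb{C}_0^+)$ and the bound $\Vert\mathbf{G}_{cl}\Vert_{\infty,\mathbb{C}_0^+}=\Vert\mathbf{G}_{cl,d}\Vert_{\infty,\overline{\mathbb{D}}^\complement}<1$, completing the verification of Definition \ref{Def:Hinf_Conti} with $\gamma=1$.

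\textbf{Main obstacle.} The delicate step is the identification $\mathbf{G}_{cl}(s)=\mathbf{G}_{cl,d}(e^{sp(1)})$: one must justify that the Laplace-domain transfer function generated by the closed-loop $C_0$-semigroup really admits this closed form, that the exponential ansatz captures the entire input-output map (no transients contribute after the usual $H^\infty$ reduction), and that $e^{sp(1)}\notin\sigma(A_{cl})$ on all of $\mathbb{C}_0^+$ so that the inverse is well defined there. Once this spectral-mapping step is justified, the remaining conclusions are essentially bookkeeping.
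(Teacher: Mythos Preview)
Your proposal is correct and follows essentially the same route as the paper's own proof: apply Theorem~\ref{Thm_Hinf_Discr_Finite} to the matrix data \eqref{Matrices_Discrete}, observe that the closed-loop PDE is again in the class \eqref{StateEquation_Diag_Uniform}--\eqref{Output_Diag_Uniform} with invertible $K_{cl}$, invoke $r(A_{cl})<1$ for exponential stability via the discrete representation, and then use the identity $\mathbf{G}_{cl}(s)=\mathbf{G}_{cl,d}(e^{sp(1)})$ together with the substitution $\theta=\omega p(1)$ to transfer the $H^\infty$-bound from $\overline{\mathbb{D}}^\complement$ to $\mathbb{C}_0^+$. The paper simply asserts the closed form $\mathbf{G}_{cl}(s)=D_{cl}+C_{cl}(e^{sp(1)}I-A_{cl})^{-1}B_{cl}$ without the exponential-ansatz justification you sketch, so your ``main obstacle'' is flagged but not elaborated there either; otherwise the arguments coincide.
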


\begin{proof}[Proof.]
We start by considering the suboptimal $H^\infty$-control problem for the finite-dimensional discrete-time system \eqref{Sigma_Block} where $A_d, B_{1,d}, B_{2,d}, C_{1,d}$, $C_{2,d}, D_{11,d}, D_{12,d}, D_{21,d}$ and $D_{22,d}$ are the matrices defined in \eqref{Matrices_Discrete}. The matrices $A_c, B_c, C_c$ and $D_c$ obtained by applying Theorem \ref{Thm_Hinf_Discr_Finite} to \eqref{Sigma_Block} are such that 
\begin{itemize}
\item the closed-loop system \eqref{Sigma_closed-loop} is well-defined, that is, the matrix $S := I-D_{22,d}D_c$ is invertible;
\item the closed-loop matrix $A_{cl}$ given in \eqref{A_cl} has spectral radius less than $1$;
\item the transfer function of the closed-loop system \eqref{Sigma_closed-loop} from $d$ to $z$, defined by $\mathbf{G}_{cl}^d(z) = D_{cl} + C_{cl}(zI-A_{cl})^{-1}B_{cl}$, has $H^\infty$-norm less than $1$,
\end{itemize}
see Definition \ref{Def:Hinf_SubOpt}. First remark that the invertibility of $S$ is sufficient to write the closed-loop system \eqref{StateEquation_Closed-loop_PDE}--\eqref{Output_Closed-loop_PDE}. Moreover, observe that the closed-loop system \eqref{StateEquation_Closed-loop_PDE}--\eqref{Output_Closed-loop_PDE} is of the same form as \eqref{StateEquation_Diag_Uniform}--\eqref{Output_Diag_Uniform}, which makes it a well-posed boundary controlled system if and only if the operator
\begin{align}
\mathcal{A}f &= -\frac{\d}{\d\zeta}(\lambda_0 f)\label{Op_A_cl}\\
D(\mathcal{A}) &= \left\{f\in \L^2(0,1;\mathbb{R}^r), \lambda_0 f\in\H^1(0,1;\mathbb{R}^r), \lambda_0(0)f(0) - A_{cl}\lambda_0(1)f(1) = 0\right\}\label{D(A_cl)}
\end{align}
is the infinitesimal generator of a $C_0$-semigroup on $\L^2(0,1;\mathbb{R}^r)$, where $r\in\mathbb{N}$ is the dimension of the matrix $A_{cl}$, not known a priori. According to Proposition \ref{prop:well-posed}, this is the case because the matrix in front of $\lambda_0(0)f(0)$ in \eqref{D(A_cl)} is the identity matrix, which is obviously invertible. Let us have a look at the internal stability of the closed-loop system. This question is equivalent to look at whether $\mathcal{A}$ is the generator of an exponentially stable $C_0$-semigroup or not. Thanks to Lemma \ref{lem:discrete-representation}, the closed-loop system \eqref{StateEquation_Closed-loop_PDE}--\eqref{Output_Closed-loop_PDE} admits an equivalent representation as an infinite-dimensional discrete-time system of the form 
\begin{align*}
x_{cl}(j+1)(\zeta) &= A_{cl}x_{cl}(j)(\zeta) + B_{cl}d(j)(\zeta)\\
z_{cl}(j)(\zeta) &= C_{cl}x_{cl}(j)(\zeta) + D_{cl}d(j)(\zeta),
\end{align*}
where, for $j\in\mathbb{N}$, the functions $x_{cl}(j)(\cdot)\in\L^2(0,1;\mathbb{R}^r), d(j)(\cdot)\in\L^2(0,1;\mathbb{R}^k), z_{cl}(j)(\cdot)\in\L^2(0,1;\mathbb{R}^l)$. Because the matrix $A_{cl}$ has spectral radius less than $1$, the semigroup generated by $\mathcal{A}$ is exponentially stable. It remains to look at the transfer function of the closed-loop system \eqref{StateEquation_Closed-loop_PDE}--\eqref{Output_Closed-loop_PDE}. It is given as an operator from $\mathbb{C}^k$ to $\mathbb{C}^l$ by $\mathbf{G}_{cl}(s) = D_{cl} + C_{cl}(e^{sp(1)}I-A_{cl})^{-1}B_{cl}$. By construction, the transfer function of the finite-dimensional discrete-time system \eqref{Sigma_closed-loop}, given by $\mathbf{G}_{cl}^{d}(z) = D_{cl} + C_{cl}\left(zI-A_{cl}\right)^{-1}B_{cl}$ is in $H^\infty(\overline{\mathbb{D}}^\complement,\mathbb{C}^{k\times l})$ and $A_{cl}$ has spectral radius less than $1$. As a consequence, $\mathbf{G}_{cl}(s)$ is analytic and bounded whenever $\vert e^{sp(1)}\vert > 1$. Because $p(1) >0$, $\vert e^{sp(1)}\vert > 1$ is equivalent to $\mathrm{Re}(s)>0$. This implies that $\mathbf{G}_{cl}\in H^\infty(\mathbb{C}_0^+,\mathbb{C}^{m\times p})$. According to Theorem \ref{Thm_Hinf_Discr_Finite}, the $H^\infty$-norm of $\mathbf{G}_{cl}^d$ is less than one, that is 
\begin{align*}
\Vert\mathbf{G}_{cl}^d\Vert_{\infty,\overline{\mathbb{D}}^\complement} = \sup_{\theta\in [0,2\pi]}\Vert \mathbf{G}_{cl}^d(e^{i\theta})\Vert < 1.
\end{align*}
Observe that
\begin{align*}
\Vert\mathbf{G}_{cl}\Vert_{\infty,\mathbb{C}_0^+} = \sup_{\omega\in\mathbb{R}} \Vert\mathbf{G}_{cl}(i\omega)\Vert &=\sup_{\omega\in\mathbb{R}} \Vert D_{cl} + C_{cl}\left(e^{i\omega p(1)}I-A_{cl}\right)^{-1}B_{cl}\Vert\\
&= \sup_{\theta\in [0,2\pi]} \Vert D_{cl} + C_{cl}\left(e^{i\theta}I-A_{cl}\right)^{-1}B_{cl}\Vert\\
&= \sup_{\theta\in [0,2\pi]}\Vert \mathbf{G}_{cl}^d(e^{i\theta})\Vert\\
&= \Vert \mathbf{G}_{cl}^d\Vert_{\infty,\overline{\mathbb{D}}^\complement}\\
&< 1,
\end{align*}
which concludes the proof.
\end{proof}

\begin{remark}
The two first rank conditions assumed in Theorem \ref{Thm:Hinf_conti} implies that Assumption \ref{Assum:Stab_Detec} is satisfied whereas the two other rank conditions imply that Assumption \ref{Assum:Regularity} holds.
\end{remark}


\section{Example: a vibrating string with force control and velocity measurement}\label{Example}

In this section, we apply the theory developed earlier in this manuscript to a one-dimensional vibrating string whose force is controlled at the boundary and whose velocity is measured at the boundary as well. This model is governed by the following PDE
\begin{equation}
\frac{\partial^2 w}{\partial t^2}(\zeta,t) = \frac{1}{\rho(\zeta)}\frac{\partial}{\partial\zeta}\left(T(\zeta)\frac{\partial w}{\partial\zeta}(\zeta,t)\right),
\label{eq:PDE_string}
\end{equation}
where $w(\zeta,t)$ is the vertical displacement of the string at position $\zeta\in [0,1]$ and at time $t\geq 0$. $\rho$ and $T$ are the mass density and the Young's modulus of the string, respectively, and are positive functions. We assume them constant in what follows, i.e. $\rho(\zeta) = 0$ and $T(\zeta) = T$. We assume that the velocity at $\zeta=1$ is perturbed and that a force is applied to the string at $\zeta = 0$, which results in the following disturbance and control
\begin{align}
d(t) &= \frac{\partial w}{\partial t}(1,t)\label{eq:Disturb_string}\\
u(t) &= T\frac{\partial w}{\partial\zeta}(0,t)\label{eq:Control_string},
\end{align}
respectively. Moreover, the \textit{to-be regulated} output is supposed to be the force of the string at $\zeta=1$ and the velocity of the string is measured at $\zeta=0$, which is written as
\begin{align}
z(t) &= T\frac{\partial w}{\partial\zeta}(1,t)\label{eq:RegOutput_string}\\
y(t) &= \frac{\partial w}{\partial t}(0,t)\label{eq:Output_string}.
\end{align}
Using the energy variables $x_1(\zeta,t) = \rho\frac{\partial w}{\partial t}(\zeta,t), x_2(\zeta,t) = \frac{\partial w}{\partial\zeta}(\zeta,t)$, the PDE \eqref{eq:PDE_string} together with the boundary inputs and outputs \eqref{eq:Disturb_string}--\eqref{eq:Control_string} and \eqref{eq:RegOutput_string}--\eqref{eq:Output_string} may be written equivalently as 
\begin{align}
\frac{\partial x}{\partial t}(\zeta,t) &= P_1\mathcal{H}\frac{\partial x}{\partial\zeta}(\zeta,t)\label{eq:PDE_String_EnergyVar}\\
\left(\begin{matrix}1\\ 0\end{matrix}\right)d(t) + \left(\begin{matrix}0\\ 1\end{matrix}\right)u(t) &= \left(\begin{matrix}0 & 0\\0 & 1\end{matrix}\right)\mathcal{H}x(0,t) + \left(\begin{matrix}1 & 0\\0 & 0\end{matrix}\right)\mathcal{H}x(1,t)\label{eq:InputDistr_string_energyVar}\\
z(t) &= \left(\begin{matrix}0 & 1\end{matrix}\right)\mathcal{H}x(1,t)\label{eq:RegOut_string_energyVar}\\
y(t) &= \left(\begin{matrix}1 & 0\end{matrix}\right)\mathcal{H}x(0,t)\label{eq:Out_string_energyVar},
\end{align}
where $P_1 := \left(\begin{smallmatrix}0 & 1\\1 & 0\end{smallmatrix}\right)$ and $\mathcal{H} := \left(\begin{smallmatrix}\frac{1}{\rho} & 0\\ 0 & T\end{smallmatrix}\right)$. Observe that the matrix $P_1\mathcal{H}$ may be decomposed as $P_1\mathcal{H} =  S^{-1}\Delta S$, where 
\begin{equation*}
S = \left(\begin{smallmatrix}\frac{1}{2} & \frac{1}{2\sigma}\\ -\frac{1}{2} & \frac{1}{2\sigma}\end{smallmatrix}\right), \Delta = \left(\begin{smallmatrix}v & 0\\ 0 & -v\end{smallmatrix}\right), S^{-1} = \left(\begin{smallmatrix}1 & -1\\\sigma & \sigma\end{smallmatrix}\right),
\end{equation*}
with $v := \sqrt{\frac{T}{\rho}}$ and $\sigma := \frac{1}{\sqrt{\rho T}}$. We define the variable $\tilde{x}(\zeta,t) := Sx(\zeta,t)$. Hence, \eqref{eq:PDE_String_EnergyVar}--\eqref{eq:Out_string_energyVar} become
\begin{align}
\frac{\partial \tilde{x}}{\partial t}(\zeta,t) &= \Delta \frac{\partial \tilde{x}}{\partial\zeta}(\zeta,t)\label{eq:PDE_String_EnergyVar_bis}\\
\left(\begin{matrix}1\\ 0\end{matrix}\right)d(t) + \left(\begin{matrix}0\\ 1\end{matrix}\right)u(t) &= v\left(\begin{matrix}0 & 0\\1 & 1\end{matrix}\right)\tilde{x}(0,t) + v\left(\begin{matrix}\sigma & -\sigma\\0 & 0\end{matrix}\right)\tilde{x}(1,t)\nonumber\\
z(t) &= v\left(\begin{matrix}1 & 1\end{matrix}\right)\tilde{x}(1,t)\nonumber\\
y(t) &= v\left(\begin{matrix}\sigma & -\sigma\end{matrix}\right)\tilde{x}(0,t)\nonumber,
\end{align}
where the relation $\mathcal{H}S^{-1} = v\left(\begin{smallmatrix}\sigma & -\sigma\\1 & 1\end{smallmatrix}\right)$ has been used. In order to get the same velocity for each transport PDE in \eqref{eq:PDE_String_EnergyVar_bis}, we introduce the variable $\overline{x}_1(\zeta,t) := \tilde{x}_1(1-\zeta,t)$. By defining the state vector $X(\zeta,t) := \left(\begin{smallmatrix}\overline{x}_1(\zeta,t) & \tilde{x}_2(\zeta,t)\end{smallmatrix}\right)^T$, there holds
\begin{align}
\frac{\partial X}{\partial t}(\zeta,t) &= -v\frac{\partial X}{\partial\zeta}(\zeta,t)\label{eq:PDE_String_EnergyVar_ter}\\
\left(\begin{matrix}1\\ 0\end{matrix}\right)d(t) + \left(\begin{matrix}0\\ 1\end{matrix}\right)u(t) &= -v\left(\begin{matrix}-\sigma & 0\\0 & -1\end{matrix}\right)X(0,t) - v\left(\begin{matrix}0 & \sigma\\-1 & 0\end{matrix}\right)X(1,t)\label{eq:InputDistr_string_energyVar_ter}\\
z(t) &= -v\left(\begin{matrix}-1 & 0\end{matrix}\right)X(0,t) - v\left(\begin{matrix}0 & -1\end{matrix}\right)X(1,t)\label{eq:RegOut_string_energyVar_ter}\\
y(t) &= -v\left(\begin{matrix}0 & \sigma\end{matrix}\right)X(0,t) - v\left(\begin{matrix}-\sigma & 0\end{matrix}\right)X(1,t)\label{eq:Out_string_energyVar_ter}.
\end{align}
According to the notations introduced in \eqref{Input_Diag_Uniform}--\eqref{Output_Diag_Uniform}, we have that
\begin{equation*}
\begin{array}{ll}
E = \left(\begin{smallmatrix}1\\0\end{smallmatrix}\right), & \\
K = \left(\begin{smallmatrix}-\sigma & 0\\0 & -1\end{smallmatrix}\right), & L = \left(\begin{smallmatrix}0 & \sigma\\-1 & 0\end{smallmatrix}\right),\\
K_y = \left(\begin{smallmatrix}0 & \sigma\end{smallmatrix}\right), & L_y = \left(\begin{smallmatrix}-\sigma & 0\end{smallmatrix}\right),\\
K_z = \left(\begin{smallmatrix}-1 & 0\end{smallmatrix}\right), & L_z = \left(\begin{smallmatrix}0 & -1\end{smallmatrix}\right).
\end{array}
\end{equation*}
Thanks to the positivity of $\rho$ and $T$, the constant $\sigma$ is positive, which implies the well-posedness of the boundary controlled system \eqref{eq:PDE_String_EnergyVar_ter}--\eqref{eq:Out_string_energyVar_ter}, see Proposition \ref{prop:well-posed}. According to \eqref{Matrices_Discrete}, there holds
\begin{equation}
\begin{array}{lll}
A_d = \left(\begin{smallmatrix}0 & 1\\-1 & 0\end{smallmatrix}\right), & B_{1,d} = \left(\begin{smallmatrix}\frac{1}{\sigma}\\ 0\end{smallmatrix}\right), & B_{2,d} = \left(\begin{smallmatrix}0\\ 1\end{smallmatrix}\right),\\
C_{1,d} = \left(\begin{smallmatrix}0 & 2\end{smallmatrix}\right), & C_{2,d} = \left(\begin{smallmatrix}2\sigma & 0\end{smallmatrix}\right), & D_{11,d} = \frac{1}{\sigma},\\
D_{12,d} = 0, & D_{21,d} = 0, & D_{22,d} = -\sigma.  
\end{array}
\label{eq:Matrices_Discrete-String}
\end{equation}
In order to solve the suboptimal control problem for \eqref{eq:PDE_String_EnergyVar_ter}--\eqref{eq:Out_string_energyVar_ter} with an arbitrary $H^\infty$ performance $\gamma > 0$, we will apply Theorem \ref{Thm_Hinf_Discr_Finite} to the following scaled matrices\footnote{The upper script ``s" is used to denote the term ``scaled".}, see \eqref{Matrices_Scaled},
\begin{equation}
  \begin{array}{lll}
  A_d^s = \left(\begin{smallmatrix}0 & 1\\-1 & 0\end{smallmatrix}\right), & B_{1,d}^s = \left(\begin{smallmatrix}\frac{1}{\sigma\sqrt{\gamma}}\\ 0\end{smallmatrix}\right), & B_{2,d}^s = \left(\begin{smallmatrix}0\\ \sqrt{\gamma}\end{smallmatrix}\right),\\
  C_{1,d}^s = \left(\begin{smallmatrix}0 & \frac{2}{\sqrt{\gamma}}\end{smallmatrix}\right), & C_{2,d}^s = \left(\begin{smallmatrix}2\sigma\sqrt{\gamma} & 0\end{smallmatrix}\right), & D_{11,d}^s = \frac{1}{\sigma\gamma},\\
  D_{12,d}^s = 0, & D_{21,d}^s = 0, & D_{22,d}^s = -\sigma\gamma.  
  \end{array}
  \label{eq:Matrices_Discrete-String_scaled}
  \end{equation}
We start by checking Assumption \ref{Assum:Stab_Detec}. For this, observe that
\begin{align*}
\rank\left(\begin{smallmatrix}\lambda I-A_d^s & B_{2,d}^s\end{smallmatrix}\right) = \rank\left(\begin{smallmatrix}\lambda & -1 & 0\\1 & \lambda & \sqrt{\gamma}\end{smallmatrix}\right) = 2,
\end{align*}
for every $\lambda\in\mathbb{C}$, which makes $(A_d,B_{2,d})$ stabilizable. In addition, there holds
\begin{align*}
\rank\left(\begin{smallmatrix}\lambda I-A_d^s\\C_{2,d}^s\end{smallmatrix}\right) = \rank\left(\begin{smallmatrix}\lambda & -1\\ 1 & \lambda\\2\sigma\sqrt{\gamma} & 0\end{smallmatrix}\right) = 2,
\end{align*}
for every $\lambda\in\mathbb{C}$, which means that $(C_{2,d}, A_d)$ is detectable. Moreover, we have that
\begin{align*}
&\rank\left(\begin{smallmatrix}e^{j\theta}-A_d^s & -B_{2,d}^s\\-C_{1,d}^s & -D_{12,d}^s\end{smallmatrix}\right) = \rank\left(\begin{smallmatrix}e^{i\theta} & -1 & 0\\1 & e^{i\theta} & -\sqrt{\gamma}\\0 & \frac{-2}{\sqrt{\gamma}} & 0\end{smallmatrix}\right) = 3\\
&\rank\left(\begin{smallmatrix}e^{i\theta}-A_d^s & -B_{1,d}^s\\-C_{2,d}^s & -D_{21,d}^s\end{smallmatrix}\right) = \rank\left(\begin{smallmatrix}e^{i\theta} & -1 & -\frac{1}{\sigma\sqrt{\gamma}}\\1 & e^{i\theta} & 0\\-2\sigma\sqrt{\gamma} & 0 & 0\end{smallmatrix}\right) = 3
\end{align*}
for every $\theta\in[0,2\pi]$. This implies that Assumptions \ref{Assum:Stab_Detec} and \ref{Assum:Regularity} are satisfied. Now we apply Theorem \ref{Thm_Hinf_Discr_Finite} with the matrices given in \eqref{eq:Matrices_Discrete-String}. We start by computing the matrices $Q_c^s, L_c^s, R_c^s, Q_o^s, L_o^s$ and $R_o^s$ defined in the statement of Theorem \ref{Thm_Hinf_Discr_Finite}. We have
\begin{equation*}
\begin{array}{lll}
Q_c^s = \left(\begin{smallmatrix}0 & 0\\ 0 & \frac{4}{\gamma}\end{smallmatrix}\right), & L_c^s = \left(\begin{smallmatrix}0 & 0\\\frac{2}{\sigma\gamma\sqrt{\gamma}} & 0\end{smallmatrix}\right), & R_c^s = \left(\begin{smallmatrix}\frac{1}{\sigma^2\gamma^2}-1 & 0\\0 & 0\end{smallmatrix}\right),\\
Q_o^s = \left(\begin{smallmatrix}\frac{1}{\sigma^2\gamma} & 0\\0 & 0\end{smallmatrix}\right), &L_o^s = \left(\begin{smallmatrix}\frac{1}{\sigma^2\gamma\sqrt{\gamma}} & 0\\0 & 0\end{smallmatrix}\right), & R_o^s = \left(\begin{smallmatrix}\frac{1}{\sigma^2\gamma^2}-1 & 0\\0 & 0\end{smallmatrix}\right).
\end{array}
\end{equation*}
It can be shown that the matrices 
\begin{align}
X^s &:= \left(\begin{smallmatrix}0 & 0\\0 & \frac{4\sigma^2\gamma}{\sigma^2\gamma^2-1}\end{smallmatrix}\right)\label{X_string_Riccati}\\
Y^s &:= \left(\begin{smallmatrix}\frac{\gamma}{\sigma^2\gamma^2-1} & 0\\0 & 0\end{smallmatrix}\right)\label{Y_string_Riccati}
\end{align}
are solutions to the Riccati equation associated with the Kalman-Szego-Popov-Yakubovich systems $\Sigma_*^s$ and $\Sigma_o^s$, respectively, see Definition \ref{Def:KSPYS}. It is then easy to see that $\gamma$ has to be larger than $\frac{1}{\sigma}$ for those matrices to be semi-positive definite. We make this assumption in what follows. 
Let us see whether these obtained solutions are stabilizing or not. With the notation $F_c^s := -(R_c^s + (B_d^s)^TX^sB_d^s)^{-1}((B_d^s)^TX^sA_d^s + (L_c^s)^T)$ and $F_o := -(R_o^s+C_d^sY^s(C_d^s)^T)^{-1}(C_d^sY^s(A_d^s)^T + (L_o^s)^T)$, observe that
\begin{equation*} 
\begin{array}{ll}
A_d^s + B_d^sF_c^s = \left(\begin{smallmatrix}0 & \frac{\sigma^2+1}{\sigma^2-1}\\0 & 0\end{smallmatrix}\right), & (A_d^s)^T + (C_d^s)^TF_o^s = \left(\begin{smallmatrix}0 & 0\\\frac{\sigma^2+1}{\sigma^2-1} & 0\end{smallmatrix}\right),
\end{array}
\end{equation*}
which are both stable matrices. Hence, the solutions $X^s$ and $Y^s$ are stabilizing. From \eqref{X_string_Riccati} and \eqref{Y_string_Riccati} we can compute the matrices $V_c^s, W_c^s$ and $V_o^s, W_o^s$ solution of \eqref{EqV_KSPYS}--\eqref{EqW_KSPYS}. There holds
\begin{equation*}
\begin{array}{ll}
V_c^s = \left(\begin{smallmatrix}\frac{\sqrt{\sigma^2\gamma^2-1}}{\sigma\gamma} & 0\\0 & \frac{2\sigma\gamma}{\sqrt{\sigma^2\gamma^2-1}}\end{smallmatrix}\right), & V_o^s = \left(\begin{smallmatrix}\frac{\sqrt{\sigma^2\gamma^2-1}}{\sigma\gamma} & 0\\0 & \frac{2\sigma\gamma}{\sqrt{\sigma^2\gamma^2-1}}\end{smallmatrix}\right),\\
W_c^s = \left(\begin{smallmatrix}0 & \frac{-2}{\sqrt{\gamma}\sqrt{\sigma^2\gamma^2-1}}\\\frac{-2\sigma\sqrt{\gamma}}{\sqrt{\sigma^2\gamma^2-1}} & 0\end{smallmatrix}\right), & W_o^s = \left(\begin{smallmatrix}\frac{-1}{\sigma\sqrt{\gamma}\sqrt{\sigma^2\gamma^2-1}} & 0\\0 & \frac{-\sqrt{\gamma}}{\sqrt{\sigma^2\gamma^2-1}}\end{smallmatrix}\right).
\end{array}
\end{equation*}
Observe now that the product $X^sY^s$ is given by the null matrix, whose spectral radius is obviously less than $1$. As a consequence, the suboptimal $H^\infty$-control problem described in Definition \ref{Def:Hinf_SubOpt} has a solution for $\gamma > \frac{1}{\sigma}$, see Theorem \ref{Thm_Hinf_Discr_Finite}. Now we look at $\Sigma_\times^s$. There holds
\begin{equation*}
\begin{array}{lll}
Q_\times^s = \left(\begin{smallmatrix}\frac{\gamma}{\sigma^2\gamma^2-1} & 0\\0 & 0\end{smallmatrix}\right), & L_\times^s = \left(\begin{smallmatrix}0 & 0\\0 & 0\end{smallmatrix}\right), & R_\times^s = \left(\begin{smallmatrix}-1 & 0\\0 & 0\end{smallmatrix}\right).
\end{array}
\end{equation*}
According to Theorem \ref{Thm_Hinf_Discr_Finite}, the solution $Z^s$ to the Riccati equation associated with the Kalman-Szego-Popov-Yakubovich system $\Sigma_\times$ and $J_\times$ is given by $Z^s := Y^s(I-X^sY^s)^{-1} = Y^s$. Moreover, $V_\times^s$ and $W_\times^s$ are expressed as
\begin{equation*}
\begin{array}{ll}
V_\times^s = \left(\begin{smallmatrix}1 & 0\\\frac{-2\sigma\gamma}{\sigma^2\gamma^2-1} & \frac{2\sigma\gamma}{\sqrt{\sigma^2\gamma^2-1}}\end{smallmatrix}\right), &W_\times^s = \left(\begin{smallmatrix}0 & 0\\0 & \frac{-\sqrt{\gamma}}{\sqrt{\sigma^2\gamma^2-1}}\end{smallmatrix}\right).
\end{array} 
\end{equation*}
According to the notations in Theorem \ref{Thm_Hinf_Discr_Finite}, $C_{2,F_1}, S_c$ and $S_\times$ are given by 
\begin{equation*}
\begin{array}{lll}
C_{2,F_1} = \left(\begin{smallmatrix}2\sigma\sqrt{\gamma} & 0\end{smallmatrix}\right), &S_c = \frac{\sigma^2\gamma^2}{\sigma^2\gamma^2-1}, & S_\times = 1.
\end{array}
\end{equation*}
Now we give the matrices that constitute $\Sigma_g^s$, see Theorem \ref{Thm_Hinf_Discr_Finite}. There holds
\begin{equation*} 
\begin{array}{lll}
A_g^s = \left(\begin{smallmatrix}0 & \frac{\sigma^2\gamma^2+1}{\sigma^2\gamma^2-1}\\0 & 0\end{smallmatrix}\right), & B_{g,1}^s = \left(\begin{smallmatrix}0\\0\end{smallmatrix}\right), & B_{g,2}^s = \left(\begin{smallmatrix}0\\ \frac{-\sqrt{\sigma^2\gamma^2-1}}{2\sigma\sqrt{\gamma}}\end{smallmatrix}\right),\\
C_{g,1}^s = \left(\begin{smallmatrix}0 & 0\end{smallmatrix}\right), & C_{g,2}^s = \left(\begin{smallmatrix}\frac{\sqrt{\sigma^2\gamma^2-1}}{\sqrt{\gamma}} & 0\end{smallmatrix}\right), &D_{g,11}^s = \frac{1}{2\sigma\gamma},\\
D_{g,12}^s = \frac{\sqrt{\sigma^2\gamma^2-1}}{2\sigma\gamma}, &D_{g,21}^s = \frac{\sqrt{\sigma^2\gamma^2-1}}{2\sigma\gamma}, &D_{g,22}^s = 0.
\end{array}
\end{equation*}
According to Remark \ref{rem:ScaledSystem_Sigma_g}, the matrices of the system $\Sigma_g$ for the original system are given by 
\begin{equation*} 
  \begin{array}{lll}
  A_g = \left(\begin{smallmatrix}0 & \frac{\sigma^2\gamma^2+1}{\sigma^2\gamma^2-1}\\0 & 0\end{smallmatrix}\right), & B_{g,1} = \left(\begin{smallmatrix}0\\0\end{smallmatrix}\right), & B_{g,2} = \left(\begin{smallmatrix}0\\ \frac{-\sqrt{\sigma^2\gamma^2-1}}{2\sigma\gamma}\end{smallmatrix}\right),\\
  C_{g,1} = \left(\begin{smallmatrix}0 & 0\end{smallmatrix}\right), & C_{g,2} = \left(\begin{smallmatrix}\frac{\sqrt{\sigma^2\gamma^2-1}}{\gamma} & 0\end{smallmatrix}\right), &D_{g,11} = \frac{1}{2\sigma},\\
  D_{g,12} = \frac{\sqrt{\sigma^2\gamma^2-1}}{2\sigma\gamma}, &D_{g,21} = \frac{\sqrt{\sigma^2\gamma^2-1}}{2\sigma\gamma}, &D_{g,22} = 0.
  \end{array}
  \end{equation*}
Now we need to choose a system $\Sigma_Q = (A_Q, B_Q, C_Q, D_Q)$ with rational transfer function $\mathbf{G}_Q\in H^\infty(\overline{\mathbb{D}}^\complement,\mathbb{C})$, which is analytic on $\mathbb{C}\setminus\overline{\mathbb{D}}$ and that satisfies $\Vert\mathbf{G}_Q\Vert_{\infty,\overline{\mathbb{D}}^\complement} < \gamma$. For this, we choose $A_Q<0$ and $B_Q, C_Q\in\mathbb{R}$ such that 
\begin{align*}
\left\vert\frac{B_QC_Q}{1+A_Q}\right\vert<\gamma,
\end{align*}
and $D_Q = 0$. According to Theorem \ref{Thm_Hinf_Discr_Finite}, an optimal compensator $\Sigma_c$ is given by $\Sigma_c = \mathrm{LFT}(\Sigma_g,\Sigma_Q)$, whose matrices $A_c, B_c, C_c$ and $D_c$ are given by
\begin{equation*} 
\begin{array}{ll}
A_c = \left(\begin{smallmatrix}0 & \frac{\sigma^2\gamma^2+1}{\sigma^2\gamma^2-1} & 0\\0 & 0 & -C_Q\frac{\sqrt{\sigma^2\gamma^2-1}}{2\sigma\gamma}\\B_Q\frac{\sqrt{\sigma^2\gamma^2-1}}{\gamma} & 0 & A_Q\end{smallmatrix}\right),& B_c = \left(\begin{smallmatrix}0\\0\\B_Q\frac{\sqrt{\sigma^2\gamma^2-1}}{2\sigma\gamma}\end{smallmatrix}\right),\\
C_c = \left(\begin{smallmatrix}0 & 0 & C_Q\frac{\sqrt{\sigma^2\gamma^2-1}}{2\sigma\gamma}\end{smallmatrix}\right), & D_c = \frac{1}{2\sigma}.
\end{array} 
\end{equation*}
As is mentioned in Remark \ref{rem:D_22_nonzero}, the aforementioned controller does not solve the suboptimal $H^\infty$-control problem in its present form because the matrix $D_{22,d}$ is not the null matrix. Therefore, according to Remark \ref{rem:D_22_nonzero}, the transfer function of the optimal compensator is given by 
\begin{align*}
\mathbf{G}_c^{opt} = \mathbf{G}_c\left(I+D_{22,d}\mathbf{G}_c\right)^{-1} = \mathbf{G}_c\left(I-\sigma\mathbf{G}_c\right)^{-1},  
\end{align*}
where $\mathbf{G}_c$ is the transfer function of $\Sigma_c$. According to \cite[Section 2.1.5]{IonescuWeiss_99}, a possible realization for $\mathbf{G}_c^{opt}$ is given by
\begin{equation*}
\begin{array}{ll}
A_c^{opt} = \left(\begin{smallmatrix}0 & \frac{\sigma^2\gamma^2+1}{\sigma^2\gamma^2-1} & 0 & 0\\0 & 0 & -C_Q\frac{\sqrt{\sigma^2\gamma^2-1}}{2\sigma\gamma} & 0\\B_Q\frac{\sqrt{\sigma^2\gamma^2-1}}{\gamma} & 0 & A_Q + B_QC_Q\frac{\sigma^2\gamma^2-1}{2\sigma\gamma^2} & 0\\0 & 0 & \tilde{B}C_Q\frac{\sqrt{\sigma^2\gamma^2-1}}{\sigma\gamma} & \tilde{A}\end{smallmatrix}\right), & B_c^{opt} = \left(\begin{smallmatrix}0\\0\\B_Q\frac{\sqrt{\sigma^2\gamma^2-1}}{\sigma\gamma}\\\tilde{B}\frac{1}{\sigma}\end{smallmatrix}\right),\\
C_c^{opt} = \left(\begin{smallmatrix}0 & 0 & C_Q\frac{\sqrt{\sigma^2\gamma^2-1}}{\sigma\gamma} & 0\end{smallmatrix}\right), & D_c^{opt} = \frac{1}{\sigma}.
\end{array} 
\end{equation*}
Note that this realization has been computed by considering $\sigma$ as the transfer function of a discrete-time system $(\tilde{A},\tilde{B},\tilde{C},\tilde{D})$ with $\tilde{A}, \tilde{B}$ real numbers such that $\vert\tilde{A}\vert<1, \tilde{C} = 0$ and $\tilde{D} = \sigma$. For more details, we refer to \cite[Section 2.1.5, \textit{Feedback connection}]{IonescuWeiss_99}. A closer look at the matrix $I - D_{22,d}D_c^{opt}$ shows that
\begin{align*}
I - D_{22,d}D_c^{opt} = 1 + \sigma\frac{1}{\sigma} = 2,
\end{align*}
which means that the closed-loop system composed of \eqref{eq:PDE_String_EnergyVar_ter}--\eqref{eq:Out_string_energyVar_ter} and \eqref{State_Controller_PDE}--\eqref{Output_Controller_PDE} with $A_c, B_c, C_c$ and $D_c$ replaced by $A_c^{opt}, B_c^{opt}, C_c^{opt}$ and $D_c^{opt}$ is well-posed. The solution to the suboptimal $H^\infty$-control problem for the vibrating string \eqref{eq:PDE_String_EnergyVar_ter}--\eqref{eq:Out_string_energyVar_ter} is summarized in the following proposition.

\begin{proposition}\label{prop:summary_string}
Let $\sigma > 0$, $A_Q < 0, B_Q, C_Q, \tilde{A}$ and $\tilde{B}$ be real numbers with $\vert A_Q\vert < 1$ and $\vert\tilde{A}\vert < 1$. Then, the suboptimal $H^\infty$-control problem for \eqref{eq:PDE_String_EnergyVar_ter}--\eqref{eq:Out_string_energyVar_ter} admits a solution with prescribed $H^\infty$-performance $\gamma$ provided that $\gamma > \frac{1}{\sigma}$ and $\left\vert\frac{B_QC_Q}{1+A_Q}\right\vert<\gamma$. Moreover, a possible optimal compensator is given by \eqref{State_Controller_PDE}--\eqref{Output_Controller_PDE} in which the matrices $A_c, B_c, C_c$ and $D_c$ are replaced by $A_c^{opt}, B_c^{opt}, C_c^{opt}, D_c^{opt}$, respectively. In addition, the closed-loop system is given by \eqref{StateEquation_Closed-loop_PDE}--\eqref{Output_Closed-loop_PDE} where the matrices $A_{cl}, B_{cl}, C_{cl}$ and $D_{cl}$ are expressed as
\begin{equation*}
\begin{array}{ll}
A_{cl} = \left(\begin{smallmatrix}0 & 1 & 0 & 0 & 0 & 0\\0 & 0 & 0 & 0 & C_Q\frac{\sqrt{\sigma^2\gamma^2-1}}{2\sigma\gamma} & 0\\0 & 0 & 0 & \frac{\sigma^2\gamma^2+1}{\sigma^2\gamma^2-1} & 0 & 0\\0 & 0 & 0 & 0 & -C_Q\frac{\sqrt{\sigma^2\gamma^2-1}}{2\sigma\gamma} & 0\\ B_Q\frac{\sqrt{\sigma^2\gamma^2-1}}{\gamma} & 0 & B_Q\frac{\sqrt{\sigma^2\gamma^2-1}}{\gamma} & 0 & A_Q & 0\\\tilde{B} & 0 & 0 & 0 & \tilde{B}C_Q\frac{\sigma^2\gamma^2-1}{2\sigma\gamma} & \tilde{A}
\end{smallmatrix}\right), & B_{cl} = \left(\begin{smallmatrix} \frac{1}{\sigma}\\ 0\\ 0\\ 0\\ 0\\ 0\\ 0\end{smallmatrix}\right),\\
C_{cl} = \left(\begin{smallmatrix}0 & 2 & 0 & 0 & 0 & 0\end{smallmatrix}\right), & D_{cl} = \frac{1}{\sigma}.
\end{array}
\end{equation*}
Furthermore, the closed-loop transfer function is given by 
\begin{align}
\mathbf{G}_{cl}^{opt}(z) &= D_{cl} + C_{cl}(zI-A_{cl})^{-1}B_{cl} = \frac{1}{\sigma} + \frac{1}{\sigma}\left[\frac{(\sigma^2\gamma^2-1)B_QC_Q}{\sigma\gamma^2z^2(z-A_Q) + B_QC_Q}\right].
\label{eq:Closed-loop_TrF}
\end{align}
\end{proposition}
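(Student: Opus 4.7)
The plan is to show that Proposition \ref{prop:summary_string} follows by essentially assembling the computations already carried out in the paragraphs preceding its statement, together with one application of Proposition \ref{Prop:closed-loop_PDE} and a direct evaluation of the resulting transfer function. First, I would observe that the preceding computations verified the hypotheses of Theorem \ref{Thm:Hinf_conti} for the scaled system \eqref{eq:Matrices_Discrete-String_scaled}: stabilizability of $(A_d^s,B_{2,d}^s)$, detectability of $(C_{2,d}^s,A_d^s)$, and the two full-rank conditions on the boundary matrices were all checked for every $\lambda\in\mathbb{C}$ (in particular on the unit circle). The Riccati solutions $X^s$ and $Y^s$ in \eqref{X_string_Riccati}--\eqref{Y_string_Riccati} were shown to be stabilizing and positive semidefinite exactly under the condition $\gamma>\tfrac{1}{\sigma}$, and the product $X^sY^s$ vanishes, so $\rho(X^sY^s)<1$ holds trivially. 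By Theorem \ref{Thm_Hinf_Discr_Finite} combined with the scaling rule of Remark \ref{rem:ScaledSystem_Sigma_g}, this already yields the unscaled system matrices $A_g,B_{g,1},B_{g,2},C_{g,1},C_{g,2},D_{g,11},D_{g,12},D_{g,21},D_{g,22}$ collected above.

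Next I would account for the nonzero direct feedthrough $D_{22,d}=-\sigma$. Following Remark \ref{rem:D_22_nonzero}, the free parameter $\Sigma_Q$ can be chosen with $A_Q<0$, $|A_Q|<1$, $D_Q=0$ and $|B_QC_Q/(1+A_Q)|<\gamma$ so that $\mathbf{G}_Q\in H^\infty(\overline{\mathbb{D}}^\complement)$ has norm strictly less than $\gamma$. The admissible compensator is then $\mathbf{G}_c^{opt}=\mathbf{G}_c(I-\sigma\mathbf{G}_c)^{-1}$, whose explicit state-space realization $(A_c^{opt},B_c^{opt},C_c^{opt},D_c^{opt})$ is obtained by the feedback-connection formula of \cite[Section 2.1.5]{IonescuWeiss_99}, which was carried out just before the statement of the proposition. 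Since $I-D_{22,d}D_c^{opt}=2\neq 0$, the closed-loop is well-defined in the sense of Proposition \ref{Prop:closed-loop_PDE}.

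The matrices $A_{cl},B_{cl},C_{cl},D_{cl}$ stated in the proposition can then be obtained by substituting the explicit $(A_d,B_{1,d},B_{2,d},C_{1,d},C_{2,d},D_{11,d},D_{12,d},D_{21,d},D_{22,d})$ from \eqref{eq:Matrices_Discrete-String} and the optimal $(A_c^{opt},B_c^{opt},C_c^{opt},D_c^{opt})$ into the block formulas \eqref{A_cl_PDE}--\eqref{D_cl_PDE}. With $D_{21,d}=0$ and $B_{g,1}=0$ many of the cross-terms vanish and the computation reduces to block matrix multiplications, producing exactly the displayed $6\times 6$ matrix $A_{cl}$ and the column $B_{cl}$. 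Once $A_{cl},B_{cl},C_{cl},D_{cl}$ are in hand, the closed-loop transfer function is a direct application of Theorem \ref{Thm:Hinf_conti} via $\mathbf{G}_{cl}(z)=D_{cl}+C_{cl}(zI-A_{cl})^{-1}B_{cl}$; the specific block-triangular structure of $A_{cl}$ (only the second row of $C_{cl}$ is nonzero, and only the first row of $B_{cl}$ is nonzero) makes the resolvent computation reduce to solving two scalar equations, leading to the expression \eqref{eq:Closed-loop_TrF}.

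The main obstacle is purely computational: carrying out the block-matrix substitutions into \eqref{A_cl_PDE}--\eqref{D_cl_PDE} without algebraic slips, in particular keeping track of the factors $\tilde{S}^{-1}=(I-D_cD_{22,d})^{-1}=\tfrac{2}{3}$ versus $S^{-1}=(I-D_{22,d}D_c)^{-1}=\tfrac{2}{3}$ and of the $\gamma$-dependent entries $\sqrt{\sigma^2\gamma^2-1}$ inside $B_c^{opt}$ and $C_c^{opt}$. Once this bookkeeping is done, the transfer function computation \eqref{eq:Closed-loop_TrF} follows by straightforward partial-fraction manipulation, and the three bullet points of Definition \ref{Def:Hinf_Conti} are exactly what Theorem \ref{Thm:Hinf_conti} delivers from Theorem \ref{Thm_Hinf_Discr_Finite}.
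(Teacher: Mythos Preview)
Your approach is correct and essentially coincides with what the paper does: the proposition is a summary of the computations already carried out in Section~\ref{Example}, and the paper itself merely remarks afterwards that the displayed matrices are obtained by substituting into \eqref{A_cl_PDE}--\eqref{D_cl_PDE}. One small slip to flag: with $D_{22,d}=-\sigma$ and $D_c^{opt}=\tfrac{1}{\sigma}$ one gets $S=\tilde{S}=1-(-\sigma)\cdot\tfrac{1}{\sigma}=2$, so $S^{-1}=\tilde{S}^{-1}=\tfrac{1}{2}$, not $\tfrac{2}{3}$; this is exactly the bookkeeping you warn about, and using the correct value is what reproduces the stated $A_{cl},B_{cl},C_{cl},D_{cl}$.
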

Note that the matrices depicted in Proposition \ref{prop:summary_string} have been computed by using \eqref{A_cl_PDE}, \eqref{B_cl_PDE}, \eqref{C_cl_PDE} and \eqref{D_cl_PDE}. As an example, we consider the following specific values for $\gamma, A_Q, B_Q, C_Q, \tilde{A}$ and $\tilde{B}$
\begin{equation*} 
\begin{array}{llllll}
\sigma = 6,& A_Q = -\frac{1}{2}, & B_Q = \frac{1}{2}\sqrt{5}, & C_Q = \frac{9}{10}\sqrt{5}, & \tilde{A} = \frac{1}{4}, & \tilde{B} = 2.
\end{array}
\end{equation*}
We shall check that the corresponding closed-loop system satisfies the required properties in the solution of the suboptimal $H^\infty$-control problem for $\gamma=\frac{1}{5}$. First observe that $\sigma>\frac{1}{\gamma}$. With this choice, the eigenvalues of $A_{cl}$ are given by $0.25, -0.9319, 0.2159 + 0.5965i, 0.2159 - 0.5965i, 0, 0$, which are all in the unit circle of the complex plane. This means that the closed-loop system is internally stable. Now we examine the transfer function given in \eqref{eq:Closed-loop_TrF} for which we let the parameter $\sigma$ to be free. The representation of $\Vert\mathbf{G}_{cl}^{opt}(e^{i\theta})\Vert$ as a function of $\theta$ and $\sigma$ is given in Figure \ref{fig:trf_fct}, where the values on the $\sigma$-axis have been selected as being larger than $\frac{1}{\gamma}$. Therein, it can be observed that the supremum of the function is below $\gamma = \frac{1}{5}$, which confirms what we found theoretically.

\begin{figure}
\centering
\includegraphics[scale=0.6]{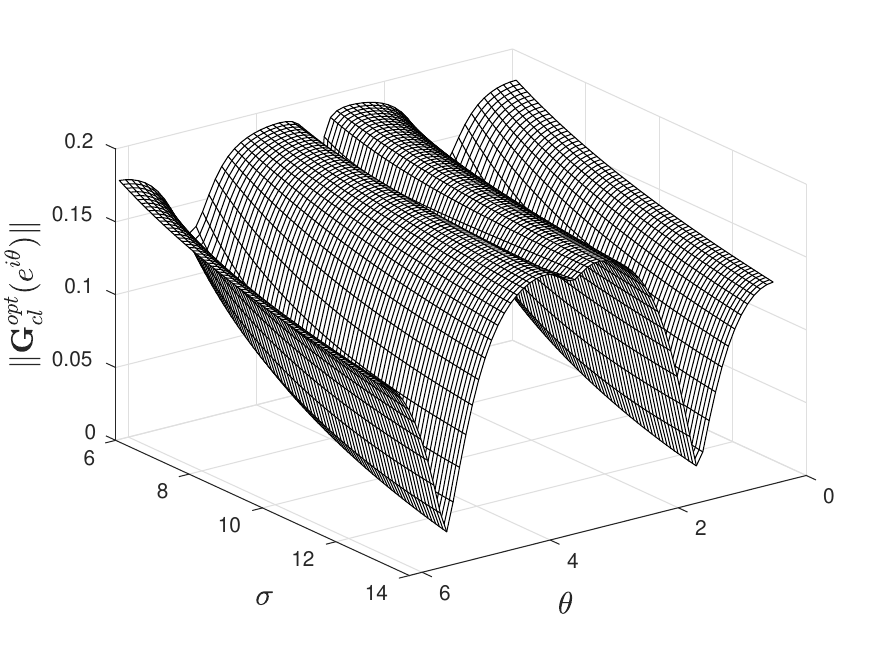}
\caption{Representation of $\Vert\mathbf{G}_{cl}^{opt}(e^{i\theta})\Vert$ for $\theta\in [0,2\pi]$ and $\sigma\in[6.1,13.1]$. \label{fig:trf_fct}}
\end{figure}


\section{Conclusion}\label{Ccl}
The suboptimal $H^\infty$-control problem has been studied for a class of boundary controlled hyperbolic PDEs. A particular feature of that class of systems is that it admits an equivalent representation as an infinite-dimensional discrete-time system. This together with the well-posedness analysis is studied in Section \ref{SystProp}. In Section \ref{Hinf_Discrete}, a way of solving the suboptimal $H^\infty$-control problem for finite-dimensional discrete-time systems is reviewed. Exploiting this, the solution to the original problem is given in Section \ref{Solution_Conti}, where it is shown in particular that the solution in finite-dimensions is sufficient to deduce the solution for the PDEs. The main results are applied to a boundary controlled vibrating string in Section \ref{Example}. A first perspective could be to enlarge the class of PDEs to a much larger class, allowing for instance for different velocities, i.e. the function $\lambda_0$ could be replaced by a diagonal matrix with different functions on the diagonal. This combined with an additional additive term of the form $P_0(\zeta)x(\zeta,t)$ in \eqref{StateEquation_Diag_Uniform} could be investigated as further research.

\section*{Acknowledgments}
This work was supported by the German Research Foundation (DFG). A. H. is supported by the DFG under the Grant HA 10262/2-1.



\bibliographystyle{abbrvnat}
\bibliography{Biblio}

\end{document}